\documentclass[final]{siamart171218}

\usepackage{amsfonts,amssymb}
\usepackage{graphicx}
\graphicspath{{figures/}}
\usepackage{booktabs,multirow}
\usepackage{algorithm}
\usepackage{algorithmic}
\usepackage{enumerate}

\makeatletter
\def\cref@override@label@type#1#2{}
\makeatother

\newcommand{\vct}[1]{\boldsymbol{#1}}

\newsiamremark{remark}{Remark}
\newsiamthm{assumption}{Assumption}

\headers{TOBYQA for Noisy Time-Varying Derivative-Free Optimization}%
        {H. Yao and P. Xie}

\title{TOBYQA: A Time-Augmented Model-Based Method for
  Derivative-Free Optimization under Noise and Temporal
  Drift}

\author{Haoyu Yao\thanks{School of Computer Science and Technology,
    Xi'an Jiaotong University, Xi'an, China
    (\mbox{\email{yaohaoyu@stu.xjtu.edu.cn}}).}
  \and
  Pengcheng Xie\thanks{Applied Mathematics and Computational Research
    Division, Lawrence Berkeley National Laboratory, Berkeley, CA
    (\email{pxie@lbl.gov}).  Corresponding author.}}

\ifpdf
\hypersetup{
  pdftitle={TOBYQA: A Time-Augmented Model-Based Method for Derivative-Free Optimization under Noise and Temporal Drift},
  pdfauthor={H. Yao and P. Xie}
}
\fi

\begin{document}

\maketitle

\begin{abstract}
Derivative-free optimization (DFO) becomes more difficult when the
observation channel varies over time and function evaluations are
noisy. Conventional model-based methods generally assume stationary
observations; under temporal drift, previously collected data can bias
gradient estimates, and the acceptance test cannot distinguish
latent-objective decrease from temporal variation. We propose TOBYQA (Time-augmented Optimization
BY Quadratic Approximation), a regularized model-based DFO framework
that jointly incorporates spatial geometry and temporal variations
within a single saddle-point interpolation system. TOBYQA augments the
classical least-Frobenius-norm quadratic interpolation system with a
linear-in-time drift term and a ridge regularization on the residual
kernel, which accommodates observation noise and ensures
well-posedness when the constraint block has full column rank, thereby
relaxing the geometric poisedness requirements of
classical interpolation. We prove that when the temporal drift is
affine in time, the recovered gradient is algebraically invariant to
the drift rate for any noise scale and sample radius. This property
leads to a
drift-compensated acceptance test that subtracts the estimated
temporal component from the observed reduction. Driven by an
adaptive cubic regularization scheme with a closed-form step and a
geometry-guarded statistical stationarity stopping rule, TOBYQA
achieves an expected oracle complexity of $O(\varepsilon^{-2})$.
Benchmark evaluations
across diverse temporal drift regimes show that, at the intermediate
tolerance $\tau=10^{-3}$, TOBYQA solves $71.0\%$, $60.8\%$, and
$41.9\%$ of the instances at $n=6$, $10$, and $20$, respectively,
compared with $25.1\%$, $18.5\%$, and $14.9\%$ for the best-performing
comparison method at the respective dimensions. These results show
higher solve rates under temporal drift while retaining comparable
performance in static environments.
\end{abstract}

\begin{keywords}
derivative-free optimization, time-varying optimization, noisy
optimization, cubic regularization, interpolation models, drift
compensation
\end{keywords}

\begin{AMS}
90C56, 90C30, 65K05
\end{AMS}

\section{Introduction}
\label{sec:intro}

In some black-box optimization problems, objective evaluations are expensive and depend on when they are performed. Examples include aerodynamic wind-tunnel testing and thermal process control, where the underlying physical configuration $\vct{x} \in \mathbb{R}^n$ may have a fixed optimum while sensor drift or temperature fluctuations change the measurement baseline during operation. Because physical trials consume time and resources, evaluation budgets are limited, and earlier evaluations cannot be repeated under identical conditions. A related setting arises in data-driven materials discovery: in the accelerated design of magnetic high-entropy alloys, Li et al.~\cite{Li2022HEA} train machine-learning surrogates of alloy properties as functions of the composition vector $\vct{x} \in \mathbb{R}^n$ and use the surrogate scores to search the composition space. When such workflows update the surrogate sequentially with newly synthesized batches, the scores supplied to the optimization loop can vary across query epochs.

Motivated by such scenarios, we consider the unconstrained minimization problem
\begin{equation}
  \label{eq:min_f}
  \min_{\vct{x} \in \mathbb{R}^n} f(\vct{x}),
\end{equation}
where the latent objective function $f:\mathbb{R}^n\to\mathbb{R}$ is continuous, smooth, and bounded from below by $f_{\mathrm{low}}>-\infty$, but cannot be directly evaluated. Instead, evaluations are obtained sequentially through a drifting, noisy observation channel. Queries are indexed by a discrete time $t \in \mathbb{N}$ that increments by one with each oracle evaluation. A query issued at point $\vct{x}$ and timestamp $t$ returns
\begin{equation}
  \label{eq:oracle_intro}
  F(\vct{x},t) \;=\; \varphi(f(\vct{x}),t) + \varepsilon(\vct{x},t),
\end{equation}
where $\varphi:\mathbb{R}\times\mathbb{N}\to\mathbb{R}$ represents an unknown temporal transformation and $\varepsilon(\vct{x},t)$ is an independent zero-mean observation noise with variance bounded by $\sigma_\varepsilon^2$. When $\varphi(\cdot, t)$ is strictly increasing with respect to its first argument, the spatial minimizers of $f$ in~\eqref{eq:min_f} remain invariant across time slices. However, because evaluations are performed sequentially, observations $F(\vct{x}_i, t_i)$ collected at distinct time steps reflect disparate temporal baselines. Finite differences and static interpolation models fitted to such data can confound the spatial gradient with temporal variation.

For stationary problems where $F$ depends solely on $\vct{x}$, derivative-free optimization (DFO) is well established~\cite{ConnScheinbergVicente2009,AudetHare2017,LarsonMenickellyWild2019}. Model-based methods such as NEWUOA~\cite{Powell2006NEWUOA} and BOBYQA~\cite{Powell2009BOBYQA} construct quadratic surrogates over sparse interpolation sets, commonly with $m=2n+1$ points, by minimizing the Frobenius norm of the model Hessian change~\cite{Powell2004LeastFrobenius}. In noisy settings, DFO-LS~\cite{CartisRoberts2019} employs overdetermined least-squares polynomial regression, while the STORM framework~\cite{ChenMenickellyScheinberg2018,BlanchetCartisMenickellyScheinberg2019} establishes global convergence under probabilistically fully-linear models. In dynamic optimization, existing methods mainly target moving optimal trajectories through gradient-based prediction--correction schemes that require temporal derivatives~\cite{Fazlyab2018,SimonettoDallAnese2020}, heuristic adaptation in evolutionary algorithms~\cite{Yazdani2025}, or temporal covariance discounting in Bayesian optimization~\cite{MoralesEnciso2015}. These approaches either use derivative information or require more samples than may be available in expensive black-box applications.

Applying classical model-based DFO to time-varying channels raises two difficulties. First, because the interpolation set is accumulated over consecutive iterations, older samples reflect earlier observation baselines; fitting a stationary polynomial surrogate to these space--time observations biases the recovered gradient. Second, the observed difference $F(\vct{x}_k, t_k) - F(\vct{x}_k+\vct{s}_k, t_k+\Delta t)$ combines latent-objective change with the channel variation accumulated over $\Delta t$. A step that improves the latent objective may therefore be rejected, while one that worsens it may be accepted. Shrinking the search radius or repeatedly re-evaluating the incumbent consumes additional queries and can reduce the local signal-to-noise ratio (SNR).

We propose TOBYQA (Time-augmented Optimization BY Quadratic Approximation), a model-based DFO algorithm that represents spatial geometry and temporal variation in one saddle-point interpolation system. Augmenting the constraint block of the least-Frobenius-norm quadratic system with a time column allows TOBYQA to recover a temporal coefficient $\gamma$ jointly with the spatial gradient and model constant from a single linear solve. Under affine temporal drift, the recovered gradient is algebraically invariant to the drift rate for any noise scale and sample radius; for general $C^2$ drift, the gradient bias is bounded by $O(L_\mu \tau_{\max}^2)$. This result motivates a drift-compensated acceptance test that subtracts the estimated temporal increment from the observed reduction. We also show that, under a rotation-invariant curvature prior, Powell's quadratic interpolation kernel is the covariance kernel of the omitted quadratic terms. This gives a generalized least-squares interpretation of the ridge regularization used to accommodate noise and relax geometric poisedness requirements. The method uses an adaptive cubic regularization scheme with an $O(n)$ closed-form step and a statistical stationarity stopping rule based on the posterior gradient covariance. Under a probabilistic fully-linear model, TOBYQA has expected oracle complexity $O(\varepsilon^{-2})$. The benchmark results report higher solve rates under the tested nonstationary channels and performance comparable to classical DFO methods on stationary problems.

\section{The TOBYQA Algorithm}
\label{sec:algorithm}

This section describes the formulation of the TOBYQA algorithm, detailing the soft-constrained quadratic model, the cubic regularization step, and the complete algorithmic procedure.

\subsection{The Soft-Constrained Quadratic Model}
\label{sec:kkt-derivation}

We start with the model-based derivative-free framework used by NEWUOA~\cite{Powell2006NEWUOA}. At each iteration, NEWUOA maintains $m$ interpolation points around the current iterate and builds a quadratic model $Q \approx f$. The coefficients of $Q$---the symmetric Hessian, the gradient, and the constant term---have $(n+1)(n+2)/2$ degrees of freedom. Determining all coefficients from the interpolation equations $Q(\vct{x}_i) = F_i$, $i = 1, \dots, (n+1)(n+2)/2$, therefore requires a number of evaluations that grows quadratically with $n$. Powell instead proposed using fewer interpolation points~\cite{Powell2004LeastFrobenius}; NEWUOA commonly uses $m = 2n+1$. The interpolation conditions determine the constant and linear coefficients and provide partial curvature information, while the remaining freedom in the quadratic part is resolved by
\[
  \min_{Q}\; \|\nabla^2 Q - \nabla^2 Q_{\mathrm{prev}}\|_F
  \quad \text{subject to} \quad Q(\vct{x}_i) = F_i, \quad i = 1, \dots, m,
\]
which selects, among the quadratics interpolating the samples, the model whose Hessian is closest to that of the previous iteration. Under standard poisedness conditions, this minimum-Frobenius-norm construction yields a unique model. The choice $m=O(n)$ also limits the temporal span of the interpolation set; Theorem~\ref{thm:nonlinear-drift} relates that span to the recovery bias under nonlinear drift.

This interpolation system does not account for the observation channel~\eqref{eq:oracle_intro}. Because the samples are collected at different times, a static quadratic model can absorb temporal variation into its spatial coefficients. In addition, the hard conditions $Q(\vct{x}_i) = F_i$ pass observation noise directly into the fitted coefficients. TOBYQA addresses both effects by adding a time column to the model and replacing hard interpolation with a penalized fit.

At iteration $k$, let $\vct{x}_k$ denote the current center and $t_k$ the timestamp at which it was evaluated. The algorithm maintains $m = 2n+1$ time-stamped samples in the active set $\mathcal{Y}_k$. For each $(\vct{x}_i,t_i,F_i)\in\mathcal{Y}_k$, define the spatial displacement $\vct{d}_i = \vct{x}_i - \vct{x}_k$ and temporal offset $\theta_i = t_i - t_k$. The center sample has $\theta_i=0$; the remaining offsets record the relative query times and may have either sign when the current center is not the most recently evaluated sample. Around the center $\vct{x}_k$, we seek a local quadratic-plus-linear-drift surrogate model
\begin{equation}\label{eq:Q-model}
  Q_k(\vct{x},t) \;=\; c + \vct{g}^{\!\top}(\vct{x} - \vct{x}_k) + \tfrac{1}{2}(\vct{x} - \vct{x}_k)^{\!\top}G\,(\vct{x} - \vct{x}_k) + \gamma\,(t - t_k),
\end{equation}
with unknowns $c\in\mathbb{R}$, $\vct{g}\in\mathbb{R}^n$, symmetric Hessian $G = G^\top\in\mathbb{R}^{n\times n}$, and temporal coefficient $\gamma\in\mathbb{R}$. Here $\gamma$ describes the local rate of change of the observation channel. Because it is estimated together with $(c,\vct{g},G)$, variation that is linear in time is represented explicitly rather than folded into the spatial model.

The model is determined by the variational problem
\begin{equation}\label{eq:soft-obj}
  \min_{c,\,\vct{g},\,G=G^\top,\,\gamma}\;
  \tfrac{1}{4}\|G\|_F^2
  \;+\;
  \tfrac{\rho}{2}\sum_{i=1}^{m}
  \bigl(Q_k(\vct{x}_i, t_i) - F_i\bigr)^2,
\end{equation}
where $\rho>0$ controls the balance between curvature regularization and data fitting. The Frobenius penalty favors a model with modest curvature, while the least-squares term accommodates noise by allowing the observations to be fitted inexactly. We regularize $G$ itself, rather than its change from the preceding model, because the preceding curvature was fitted on an earlier temporal baseline. With the factor $1/4$, the KKT reduction below produces the classical kernel $A_{ij}=\tfrac{1}{2}(\vct{d}_i^\top \vct{d}_j)^2$.

Introduce the residuals $\xi_i:=F_i-Q_k(\vct{x}_i,t_i)$. Then~\eqref{eq:soft-obj} is equivalently written as
\begin{equation}\label{eq:soft-constrained}
  \min_{c,\,\vct{g},\,G=G^\top,\,\gamma,\,\vct{\xi}}\;
  \tfrac{1}{4}\|G\|_F^2 + \tfrac{\rho}{2}\sum_{i=1}^m \xi_i^2
  \quad \text{s.t.} \quad Q_k(\vct{x}_i, t_i) + \xi_i = F_i, \;\; i = 1, \dots, m,
\end{equation}
Attaching Lagrange multipliers $\lambda_i$ to the observation constraints yields the Lagrangian
\begin{equation}\label{eq:lagrangian}
  \mathcal{L}(c, \vct{g}, G, \gamma, \vct{\xi}, \vct{\lambda})
  \;=\;
  \tfrac{1}{4}\|G\|_F^2 + \tfrac{\rho}{2}\sum_{i=1}^m \xi_i^2
  - \sum_{i=1}^m \lambda_i \Bigl(Q_k(\vct{x}_i, t_i) + \xi_i - F_i\Bigr).
\end{equation}
Setting the partial derivatives of $\mathcal{L}$ with respect to the primal variables to zero yields the KKT stationarity conditions:
\begin{align}
  \nabla_G \mathcal{L} = 0 &\implies G \;=\; \sum_{i=1}^m \lambda_i\,\vct{d}_i\,\vct{d}_i^{\!\top}, \label{eq:stat-G} \\
  \nabla_c \mathcal{L} = 0 &\implies \sum_{i=1}^m \lambda_i \;=\; 0, \label{eq:stat-c} \\
  \nabla_{\vct{g}} \mathcal{L} = 0 &\implies \sum_{i=1}^m \lambda_i \vct{d}_i \;=\; \vct{0}, \label{eq:stat-g} \\
  \nabla_\gamma \mathcal{L} = 0 &\implies \sum_{i=1}^m \lambda_i \theta_i \;=\; 0, \label{eq:stat-gamma} \\
  \nabla_{\xi_i} \mathcal{L} = 0 &\implies \rho\,\xi_i \;=\; \lambda_i \iff \xi_i \;=\; \tfrac{1}{\rho}\lambda_i. \label{eq:stat-xi}
\end{align}
The relation $\lambda_i=\rho\xi_i$ ties the multipliers directly to the residuals. The remaining conditions state that $\vct{\lambda}$ is orthogonal to the constant, spatial-linear, and temporal-linear columns of the model. In particular, $\sum_i\lambda_i\theta_i=0$ is the extra condition introduced by the time column. It separates a linear temporal component from the spatial fit and leads to the drift-decoupling result in Section~\ref{sec:properties}.

Substituting $\xi_i = \lambda_i/\rho$ and~\eqref{eq:stat-G} into the constraint equations gives, for each $i = 1, \dots, m$, the interpolation relation
\begin{equation}\label{eq:row-i}
  \tfrac{1}{2}\sum_{j=1}^m \lambda_j (\vct{d}_i^\top \vct{d}_j)^2 + c + \vct{d}_i^\top \vct{g} + \gamma \theta_i + \tfrac{1}{\rho}\lambda_i \;=\; F_i.
\end{equation}
Combining~\eqref{eq:row-i} with the three orthogonality constraints~\eqref{eq:stat-c}, \eqref{eq:stat-g}, and~\eqref{eq:stat-gamma}, we obtain the augmented KKT linear system
\begin{equation}\label{eq:tobyqa-kkt}
  \begin{bmatrix}
     A + \tfrac{1}{\rho}I_m & \mathbf{1}_m & D & \vct{\theta} \\
    \mathbf{1}_m^{\!\top} & 0 & 0 & 0 \\
    D^{\!\top} & 0 & 0 & 0 \\
    \vct{\theta}^{\!\top} & 0 & 0 & 0
  \end{bmatrix}
  \begin{bmatrix}\vct{\lambda}\\c\\\vct{g}\\\gamma\end{bmatrix}
  \;=\;
  \begin{bmatrix}\vct{F}\\0\\\vct{0}\\0\end{bmatrix},
\end{equation}
whose coefficient matrix we denote by $M$. Here $A \in \mathbb{R}^{m\times m}$ has entries $A_{ij} = \tfrac{1}{2}(\vct{d}_i^\top \vct{d}_j)^2$, $D \in \mathbb{R}^{m\times n}$ has rows $\vct{d}_i^\top$, $\vct{\theta} = (\theta_1, \dots, \theta_m)^\top \in \mathbb{R}^m$, $\mathbf{1}_m$ is the all-ones vector, and $\vct{F} = (F_1, \dots, F_m)^\top$. We denote the augmented constraint block by
\begin{equation}\label{eq:Xaug-def}
  X_{\mathrm{aug}} \;:=\;
  \begin{bmatrix}\mathbf{1}_m & D & \vct{\theta}\end{bmatrix}
  \;\in\;\mathbb{R}^{m\times(n+2)},
\end{equation}
whose first $n+1$ columns form the constraint block of the classical NEWUOA system, and whose last column $\vct{\theta}$ provides the temporal extension that enables the joint recovery of the drift rate $\gamma$.

\subsection{Cubic Regularization Step and Decision Layer}
\label{sec:arc-decision}

Given the recovered gradient $\vct{g}_k$, TOBYQA computes the trial step from
\begin{equation}\label{eq:arc-subproblem}
  \min_{\vct{s} \in \mathbb{R}^n} m_k(\vct{s}) \;:=\; \vct{g}_k^\top \vct{s} + \tfrac{\sigma_k}{3} \|\vct{s}\|_2^3,
\end{equation}
where $\sigma_k>0$ controls the step length. Problem~\eqref{eq:arc-subproblem} has the closed-form solution
\begin{equation}\label{eq:closed-form-step}
  \vct{s}_k \;=\; -\frac{\vct{g}_k}{\sqrt{\sigma_k \|\vct{g}_k\|_2}},
  \qquad
  \|\vct{s}_k\|_2 \;=\; \sqrt{\frac{\|\vct{g}_k\|_2}{\sigma_k}},
\end{equation}
whose computation requires only $O(n)$ arithmetic operations. The predicted reduction used in the acceptance test below is the decrease of the \emph{linear} part of the model, which excludes the cubic regularization term:
\begin{equation}\label{eq:pred-def}
  \mathrm{Pred}_k \;:=\; -\,\vct{g}_k^{\top} \vct{s}_k \;=\; \|\vct{g}_k\|_2^{3/2} \sigma_k^{-1/2} \;=\; \|\vct{g}_k\|_2 \|\vct{s}_k\|_2.
\end{equation}
As a safeguard against extrapolation, the trial step is additionally capped at $\chi$ times the current sample radius, $\|\vct{s}_k\|_2 \le \chi \max_i \|\vct{d}_i\|_2$; whenever the cap is active, the step is rescaled to that length and $\mathrm{Pred}_k$ is recomputed on the truncated step.

Evaluating the candidate point $\vct{x}_k + \vct{s}_k$ through the observation channel at timestamp $t_{\mathrm{new}}$ returns $F_{\mathrm{new}} := F(\vct{x}_k + \vct{s}_k, t_{\mathrm{new}})$. To account for the channel variation over the query interval $\Delta t = t_{\mathrm{new}} - t_{\mathrm{old}}$ (where $F_{\mathrm{old}} := F(\vct{x}_k, t_{\mathrm{old}})$), we use the recovered temporal coefficient $\gamma$ to define the \emph{drift-compensated actual reduction}:
\begin{equation}\label{eq:ared-def}
  \mathrm{Ared}_k \;:=\; (F_{\mathrm{old}} - F_{\mathrm{new}}) + \gamma\,(t_{\mathrm{new}} - t_{\mathrm{old}}).
\end{equation}
Since $\mathrm{Ared}_k$ contains two independent observation noise terms---$\varepsilon(\vct{x}_k, t_{\mathrm{old}})$ in $F_{\mathrm{old}}$ and $\varepsilon(\vct{x}_k + \vct{s}_k, t_{\mathrm{new}})$ in $F_{\mathrm{new}}$---its stochastic component has variance $2 R_{\mathrm{obs}}$. We therefore use the noise-gated acceptance criterion
\begin{equation}\label{eq:noise-gate}
  \mathrm{Ared}_k \;\ge\; -\frac{\kappa_{\mathrm{tol}}}{2} \sqrt{2 R_{\mathrm{obs}}},
\end{equation}
where $R_{\mathrm{obs}} = \sigma_\varepsilon^2$ is the observation noise variance and $\kappa_{\mathrm{tol}} > 0$ is a stationarity tolerance. When~\eqref{eq:noise-gate} holds, the center advances to $\vct{x}_{k+1} = \vct{x}_k + \vct{s}_k$ with $t_{k+1} = t_{\mathrm{new}}$. If the ratio $\rho_k := \mathrm{Ared}_k / \mathrm{Pred}_k$ exceeds $\eta = 0.75$, the regularization parameter is decreased according to $\sigma_{k+1} = \sigma_k / \gamma_1$, where $\gamma_1 > 1$; otherwise it is held fixed. If~\eqref{eq:noise-gate} is violated, the step is rejected, the incumbent center remains unchanged, and $\sigma_{k+1} = \gamma_2 \sigma_k$ with $\gamma_2 > 1$.

Each trial evaluation is retained in the active set. To keep its size fixed at $m=2n+1$, the sample at the current center is protected and the oldest of the remaining samples is replaced. The resulting temporal span $\tau_{\max}:=\max_i|t_i-t_k|$ appears in the nonlinear-drift error bound of Theorem~\ref{thm:nonlinear-drift}.

Finally, the algorithm terminates based on a statistical stationarity criterion derived from the posterior covariance of the KKT solve. By Proposition~\ref{prop:solvability} below, the primal parameters admit the explicit representation $(c, \vct{g}^\top, \gamma)^\top = \Pi \vct{F}$ with the recovery operator $\Pi = S^{-1} X_{\mathrm{aug}}^\top K^{-1}$, where $S = X_{\mathrm{aug}}^\top K^{-1} X_{\mathrm{aug}}$ and $K = A + \tfrac{1}{\rho} I_m$. If the observation noise were the only error source, i.e., if $\vct{F}$ carried the white covariance $R_{\mathrm{obs}} I_m$, the recovered parameters would inherit the covariance
\begin{equation}\label{eq:pnorm-def}
  \mathrm{Cov}\bigl(\Pi \vct{F}\bigr)
  \;=\; R_{\mathrm{obs}}\, S^{-1} \bigl(X_{\mathrm{aug}}^\top K^{-2} X_{\mathrm{aug}}\bigr) S^{-1}
  \;=:\; R_{\mathrm{obs}}\, P_{\mathrm{norm}},
\end{equation}
  a normalized covariance matrix that scales linearly with the noise level; writing $B := X_{\mathrm{aug}}^\top K^{-2} X_{\mathrm{aug}}$, we have $P_{\mathrm{norm}} = S^{-1} B S^{-1}$. In particular, the covariance contribution to the estimated spatial gradient is $R_{\mathrm{obs}} [P_{\mathrm{norm}}]_{gg}$. The algorithm declares convergence when the recovered gradient norm falls below the corresponding uncertainty level:
\begin{equation}\label{eq:stopping-rule}
  \|\vct{g}_k\|_2 \;<\; \kappa_{\mathrm{tol}} \sqrt{\mathrm{tr}\bigl(R_{\mathrm{obs}} [P_{\mathrm{norm}}]_{gg}\bigr)}.
\end{equation}
To avoid termination when the sample geometry temporarily inflates the trace of $P_{\mathrm{norm}}$, condition~\eqref{eq:stopping-rule} is applied only when $\mathrm{tr}(P_{\mathrm{norm}}) \le 5 \cdot \mathrm{tr}(P_{\mathrm{baseline}})$, where $P_{\mathrm{baseline}}$ is recorded at the initial orthogonal configuration. This check affects only termination. A floating-point stagnation test also terminates the run when $\|\vct{s}_k\|_2 < 10^{-14} \max(\|\vct{x}_k\|_2, 1)$.

\subsection{Algorithm Summary}
\label{sec:algo-complete}

Algorithm~\ref{alg:tobyqa} summarizes the resulting procedure. The reported implementation uses $\Delta_0=2$, $\gamma_1=2$, $\gamma_2=1.2$, $\eta=0.75$, $\kappa_{\mathrm{tol}}=3$, and $\chi=100$.

\refstepcounter{algorithm}
\label{alg:tobyqa}
\noindent\textbf{Algorithm~\thealgorithm. The TOBYQA Algorithm}
\par\nobreak\smallskip
\begin{algorithmic}[1]
\REQUIRE $\vct{x}_0\in\mathbb{R}^n$, $\Delta_0>0$, $R_{\mathrm{obs}}$, $\gamma_1,\gamma_2>1$, $\eta$, $\kappa_{\mathrm{tol}}$, $\chi>1$.
\ENSURE Approximate stationary point $\vct{x}^*$.
\STATE Evaluate $\vct{x}_0$ and $\vct{x}_0\pm\Delta_0\vct{e}_j$ $(j=1,\ldots,n)$ at successive timestamps to form $\mathcal Y_0$.
\STATE Set $F_{\mathrm{old}}\leftarrow F(\vct{x}_0,t_0)$ and $s_F\leftarrow\max_iF_i-\min_iF_i$.
\STATE Set $\widehat\tau\leftarrow\max\{s_F/\Delta_0^2,10^{-8}\}$, $R\leftarrow\max\{R_{\mathrm{obs}},(10^{-6}s_F)^2,10^{-16}\}$.
\STATE Set $\rho\leftarrow\mathrm{clip}(\widehat\tau^2/R,10^2,10^{14})$ and $\sigma_0\leftarrow\max\{s_F/\Delta_0^3,10^{-8}\}$.
\STATE Compute $P_{\mathrm{baseline}}$ from the KKT system on $\mathcal Y_0$.
\FOR{$k = 0, 1, 2, \dots$ until evaluation budget exhausted}
  \STATE Form $A$, $\vct{\theta}$, and $X_{\mathrm{aug}}$; solve~\eqref{eq:tobyqa-kkt} for $(\vct{\lambda},c,\vct{g}_k,\gamma_k)$ and compute $P_{\mathrm{norm}}$.
  \IF{$\|\vct{g}_k\|_2<\kappa_{\mathrm{tol}}\sqrt{\mathrm{tr}(R_{\mathrm{obs}}[P_{\mathrm{norm}}]_{gg})}$ and $\mathrm{tr}(P_{\mathrm{norm}})\le5\,\mathrm{tr}(P_{\mathrm{baseline}})$}
    \STATE \textbf{return} $\vct{x}_k$.
  \ENDIF
  \STATE Set $\vct{s}_k\leftarrow-\vct{g}_k/\sqrt{\sigma_k\|\vct{g}_k\|_2}$ and $\mathrm{Pred}_k\leftarrow-\vct{g}_k^\top \vct{s}_k$.
  \IF{$\|\vct{s}_k\|_2>\chi\max_i\|\vct{d}_i\|_2$}
    \STATE Rescale $\vct{s}_k$ to the cap and recompute $\mathrm{Pred}_k\leftarrow-\vct{g}_k^\top \vct{s}_k$.
  \ENDIF
  \STATE Query $(\vct{x}_k+\vct{s}_k,t_{\mathrm{new}})$ to obtain $F_{\mathrm{new}}$.
  \STATE Set $\mathrm{Ared}_k\leftarrow F_{\mathrm{old}}-F_{\mathrm{new}}+\gamma_k(t_{\mathrm{new}}-t_{\mathrm{old}})$ and $\rho_k\leftarrow\mathrm{Ared}_k/\mathrm{Pred}_k$.
  \IF{$\mathrm{Ared}_k \ge -\frac{\kappa_{\mathrm{tol}}}{2}\sqrt{2 R_{\mathrm{obs}}}$}
    \STATE $\vct{x}_{k+1}\leftarrow \vct{x}_k+\vct{s}_k$, $t_{\mathrm{old}}\leftarrow t_{\mathrm{new}}$, $F_{\mathrm{old}}\leftarrow F_{\mathrm{new}}$.
    \IF{$\rho_k > \eta$}
      \STATE $\sigma_{k+1}\leftarrow\sigma_k/\gamma_1$.
    \ELSE
      \STATE $\sigma_{k+1}\leftarrow\sigma_k$.
    \ENDIF
  \ELSE
    \STATE $\vct{x}_{k+1}\leftarrow \vct{x}_k$, $\sigma_{k+1}\leftarrow\gamma_2\sigma_k$.
  \ENDIF
  \STATE Remove from $\mathcal Y_k$ the oldest sample not located at $\vct{x}_{k+1}$.
  \STATE Insert $(\vct{x}_k+\vct{s}_k,t_{\mathrm{new}},F_{\mathrm{new}})$ to form $\mathcal Y_{k+1}$.
\ENDFOR
\end{algorithmic}

\section{Theoretical Properties of the Augmented System}
\label{sec:properties}

We now analyze the mathematical properties of the time-augmented formulation, including its algebraic solvability, drift decoupling, and statistical interpretation.

\subsection{Solvability of the Augmented KKT System}
\label{sec:well-posedness}

We begin by establishing the algebraic conditions under which the augmented saddle-point KKT system~\eqref{eq:tobyqa-kkt} possesses a unique solution. 

\begin{definition}
\label{def:rank}
The active interpolation set $\mathcal{Y}_k = \{(\vct{x}_i, t_i, F_i)\}_{i=1}^m$ is said to satisfy the augmented column-rank condition if the matrix $X_{\mathrm{aug}} = [\mathbf{1}_m \;\; D \;\; \vct{\theta}] \in \mathbb{R}^{m\times(n+2)}$ has full column rank, i.e.,
\begin{equation}\label{eq:full-rank}
  \operatorname{rank}(X_{\mathrm{aug}}) \;=\; n + 2.
\end{equation}
\end{definition}

Condition~\eqref{eq:full-rank} requires that the all-ones vector $\mathbf{1}_m$, the spatial displacement columns $\vct{d}_{\cdot, 1}, \dots, \vct{d}_{\cdot, n}$, and the temporal offset vector $\vct{\theta}$ are linearly independent in $\mathbb{R}^m$. The following proposition gives the corresponding solvability result.

\begin{proposition}
\label{prop:solvability}
Let $K := A + \tfrac{1}{\rho}I_m \in \mathbb{R}^{m\times m}$, where $A_{ij} = \tfrac{1}{2}(\vct{d}_i^\top \vct{d}_j)^2$ and $\rho > 0$. Then:
\begin{enumerate}[\rm(i)]
  \item $K$ is symmetric and strictly positive definite ($K \succ 0$).
  \item If the active set $\mathcal{Y}_k$ satisfies Definition~\ref{def:rank}, the Schur complement matrix
  \begin{equation}\label{eq:schur-S}
    S \;:=\; X_{\mathrm{aug}}^\top K^{-1} X_{\mathrm{aug}} \;\in\; \mathbb{R}^{(n+2)\times(n+2)}
  \end{equation}
  is symmetric and strictly positive definite ($S \succ 0$).
  \item The augmented KKT coefficient matrix $M$ in~\eqref{eq:tobyqa-kkt} is nonsingular, and the unique primal solution is given explicitly by
  \begin{equation}\label{eq:primal-sol}
    \begin{bmatrix}c\\\vct{g}\\\gamma\end{bmatrix}
    \;=\;
    S^{-1} X_{\mathrm{aug}}^\top K^{-1} \vct{F}
    \;=:\;
    \Pi \vct{F},
  \end{equation}
  where $\Pi := S^{-1} X_{\mathrm{aug}}^\top K^{-1} \in \mathbb{R}^{(n+2)\times m}$ is the linear recovery operator. The corresponding dual variable is $\vct{\lambda} = K^{-1}(\vct{F} - X_{\mathrm{aug}} \Pi \vct{F})$.
\end{enumerate}
\end{proposition}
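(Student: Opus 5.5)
For (i), I will show that $A$ is positive semidefinite by writing it as a Gram matrix. For each $i$, let $\vct{v}_i := \tfrac{1}{\sqrt{2}}\,\vct{d}_i\otimes\vct{d}_i \in \mathbb{R}^{n^2}$, equivalently $\tfrac{1}{\sqrt2}\operatorname{vec}(\vct{d}_i\vct{d}_i^\top)$. Then
\[
\vct{v}_i^\top\vct{v}_j=\tfrac12(\vct{d}_i^\top\vct{d}_j)^2=A_{ij}.
\]
So $A=V^\top V$ with $V=[\vct{v}_1,\dots,\vct{v}_m]$, and for any $\vct{z}$ we get $\vct{z}^\top A\vct{z}=\|V\vct{z}\|_2^2\ge 0$. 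Symmetry of $K$ is immediate. Since $\rho>0$, we have $\vct{z}^\top K\vct{z}\ge \rho^{-1}\|\vct{z}\|_2^2>0$ for $\vct{z}\ne 0$, so $K\succ0$. This also gives $\lambda_{\min}(K)\ge 1/\rho$, which I will not need further.

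For (ii), $K^{-1}$ exists and is symmetric positive definite by (i). Hence $S=X_{\mathrm{aug}}^\top K^{-1}X_{\mathrm{aug}}$ is symmetric, and $\vct{w}^\top S\vct{w}=(X_{\mathrm{aug}}\vct{w})^\top K^{-1}(X_{\mathrm{aug}}\vct{w})\ge0$. Equality forces $X_{\mathrm{aug}}\vct{w}=0$, and by the full column rank in Definition~\ref{def:rank} this forces $\vct{w}=0$. So $S\succ0$.

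For (iii), write $M=\begin{bmatrix}K & X_{\mathrm{aug}}\\ X_{\mathrm{aug}}^\top & 0\end{bmatrix}$, the ordering $(c,\vct g,\gamma)$ matching the columns $[\mathbf 1_m\ D\ \vct\theta]$. I will solve $M(\vct\lambda,\vct p)=(\vct F,0)$ by block elimination.
\begin{enumerate}[\rm(a)]
\item The first block row gives $\vct\lambda=K^{-1}(\vct F-X_{\mathrm{aug}}\vct p)$.
\item Substituting into $X_{\mathrm{aug}}^\top\vct\lambda=0$ gives $S\vct p=X_{\mathrm{aug}}^\top K^{-1}\vct F$.
\item Since $S$ is invertible, $\vct p=\Pi\vct F$, and then $\vct\lambda=K^{-1}(\vct F-X_{\mathrm{aug}}\Pi\vct F)$.
\end{enumerate}

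Nonsingularity of $M$ follows from the same elimination applied to a homogeneous right-hand side. If $M(\vct\lambda,\vct p)=0$, then $\vct\lambda=-K^{-1}X_{\mathrm{aug}}\vct p$, so $S\vct p=0$. This gives $\vct p=0$ and then $\vct\lambda=0$. Alternatively, the block factorization gives $\det M=\det K\cdot\det(-S)\neq0$.

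None of these steps is a real obstacle. The only point needing care is the Gram representation of $A$ in (i). Because $K$ is positive definite rather than merely semidefinite, the classical NEWUOA requirement that $A$ be nonsingular on the null space of $X^\top$ is not needed here.
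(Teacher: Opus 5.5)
Your proposal is correct and follows essentially the same route as the paper: the Gram-matrix argument for $A\succeq 0$, then $S\succ 0$ from full column rank of $X_{\mathrm{aug}}$, then block elimination for (iii). Your vectors $\tfrac{1}{\sqrt2}\vct{d}_i\otimes\vct{d}_i$ are just the vectorized form of the paper's rank-one matrices $\tfrac{1}{\sqrt2}\vct{d}_i\vct{d}_i^\top$ under the Frobenius inner product, and your homogeneous-system argument (or $\det M=\det K\cdot\det(-S)\neq 0$) usefully spells out the nonsingularity of $M$, which the paper only cites as standard.
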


\begin{proof}
For part (i), let $\vct{v} \in \mathbb{R}^m$ be non-zero. Since $A_{ij} = \tfrac{1}{2}(\vct{d}_i^\top \vct{d}_j)^2$, which equals $\tfrac{1}{2}\operatorname{tr}\bigl((\vct{d}_i \vct{d}_i^\top)(\vct{d}_j \vct{d}_j^\top)\bigr)$, the matrix $A$ is the Gram matrix of the symmetric rank-one matrices $\{\tfrac{1}{\sqrt{2}} \vct{d}_i \vct{d}_i^\top\}_{i=1}^m$ under the Frobenius inner product, whence $A$ is symmetric positive semidefinite ($A \succeq 0$). For any $\rho > 0$, we have $\vct{v}^\top K \vct{v} = \vct{v}^\top A \vct{v} + \tfrac{1}{\rho}\|\vct{v}\|_2^2 \ge \tfrac{1}{\rho}\|\vct{v}\|_2^2 > 0$, proving $K \succ 0$.

For part (ii), let $\vct{u} \in \mathbb{R}^{n+2}$ be non-zero. By Definition~\ref{def:rank}, $X_{\mathrm{aug}}$ has full column rank, so $\vct{w} := X_{\mathrm{aug}} \vct{u} \ne \vct{0} \in \mathbb{R}^m$. Because $K \succ 0$, its inverse $K^{-1} \succ 0$. Therefore, $\vct{u}^\top S \vct{u} = (X_{\mathrm{aug}} \vct{u})^\top K^{-1} (X_{\mathrm{aug}} \vct{u}) = \vct{w}^\top K^{-1} \vct{w} > 0$, which establishes $S \succ 0$.

For part (iii), the block $2\times 2$ saddle-point system~\eqref{eq:tobyqa-kkt} has invertible upper-left block $K$ and invertible Schur complement $-S$. Standard block Gaussian elimination yields the unique solution: substituting $\vct{\lambda} = K^{-1}(\vct{F} - X_{\mathrm{aug}}[c, \vct{g}^\top, \gamma]^\top)$ into $X_{\mathrm{aug}}^\top \vct{\lambda} = \vct{0}$ gives $X_{\mathrm{aug}}^\top K^{-1} X_{\mathrm{aug}}[c, \vct{g}^\top, \gamma]^\top = X_{\mathrm{aug}}^\top K^{-1} \vct{F}$, which is $S [c, \vct{g}^\top, \gamma]^\top = X_{\mathrm{aug}}^\top K^{-1} \vct{F}$. Inverting $S$ yields~\eqref{eq:primal-sol}.
\end{proof}

\begin{remark}
\label{rem:poisedness}
In classical derivative-free optimization (such as NEWUOA~\cite{Powell2006NEWUOA} or deterministic trust-region methods~\cite{ConnScheinbergVicente2009}), interpolation poisedness is used to control both solvability and conditioning. In TOBYQA, the ridge term $\tfrac{1}{\rho}I_m$ guarantees $K\succ0$, while Proposition~\ref{prop:solvability} reduces the algebraic solvability requirement to full column rank of $X_{\mathrm{aug}}$. The rank condition is weaker than strict $\Lambda$-poisedness, although near-rank-deficient configurations may still be poorly conditioned.
\end{remark}

\begin{corollary}
\label{cor:axis-rank}
Let $\mathcal{Y}_0 = \{(\vct{x}_i, t_i, F_i)\}_{i=1}^{2n+1}$ be the initial active set constructed by the coordinate axis-probe with step size $\Delta_0 > 0$ across strictly increasing timestamps $t_1 < t_2 < \dots < t_{2n+1}$, with $\vct{x}_1 = \vct{x}_0$ evaluated at $t_1$. Then $\operatorname{rank}(X_{\mathrm{aug}}) = n + 2$.
\end{corollary}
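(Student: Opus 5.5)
We need to show that the $(2n+1)\times(n+2)$ matrix $X_{\mathrm{aug}}=[\mathbf{1}_m\;D\;\vct{\theta}]$ built from the axis probe has trivial null space. The rows are explicit once we fix the ordering used in Algorithm~\ref{alg:tobyqa}. The center is $\vct{x}_1=\vct{x}_0$ with $\vct{d}_1=\vct{0}$ and $\theta_1=0$. The remaining $2n$ points are $\vct{x}_0\pm\Delta_0\vct{e}_j$, each occurring once, with displacements $\pm\Delta_0\vct{e}_j$ and offsets $\theta_i=t_i-t_1>0$. The ordering within the pairs does not matter for the argument; only the strict monotonicity of the timestamps matters, and in particular $\theta_i>0$ for every $i\ge 2$.

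Suppose $X_{\mathrm{aug}}\vct{u}=\vct{0}$ with $\vct{u}=(a,\vct{b},c)\in\mathbb{R}\times\mathbb{R}^n\times\mathbb{R}$. Then we have the following row equations:
\begin{itemize}
\item the center row gives $a=0$;
\item for each $j$, the row for $\vct{x}_0+\Delta_0\vct{e}_j$, at offset $\theta_j^+$, gives $\Delta_0 b_j+c\,\theta_j^+=0$;
\item the row for $\vct{x}_0-\Delta_0\vct{e}_j$, at offset $\theta_j^-$, gives $-\Delta_0 b_j+c\,\theta_j^-=0$.
\end{itemize}
Adding the last two equations gives $c(\theta_j^++\theta_j^-)=0$. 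Since both offsets are strictly positive, their sum is positive, so $c=0$. Substituting back gives $b_j=0$ for every $j$, because $\Delta_0>0$. Hence $\vct{u}=\vct{0}$, so the $n+2$ columns are linearly independent and $\operatorname{rank}(X_{\mathrm{aug}})=n+2$. This also fits the dimensions, since $m=2n+1\ge n+2$ for $n\ge1$.

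The step that needs care is showing that the time column is not in the span of the others. In the classical NEWUOA block $[\mathbf{1}_m\;D]$ the symmetric pairs give rank $n+1$ easily, but $\vct{\theta}$ could in principle be a combination of $\mathbf{1}_m$ and $D$. The pairing argument rules this out: any combination of $\mathbf{1}_m$ and $D$ that vanishes at the center takes values summing to zero over each antipodal pair $\pm\Delta_0\vct{e}_j$, whereas $\vct{\theta}$ vanishes at the center and has a strictly positive sum over each pair. I would state this observation explicitly, noting that it uses only $\theta_1=0$ and $\theta_i>0$ for $i\ge2$. Those two facts follow from $t_1<t_2<\dots<t_{2n+1}$ together with the convention that the center $\vct{x}_0$ is evaluated first, at $t_1$.
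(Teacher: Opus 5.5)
Your proof is correct and follows essentially the same route as the paper. The center row forces the constant coefficient to vanish, and adding the two rows of each antipodal probe pair forces the time coefficient to vanish because $\theta_j^+ + \theta_j^- > 0$. The only difference is packaging: the paper first shows $\operatorname{rank}[\mathbf{1}_m\;D]=n+1$ and then argues by contradiction that $\vct{\theta}\notin\operatorname{span}\{\mathbf{1}_m, D\}$, while you run a single null-space computation that also does not rely on the specific row indexing $2j, 2j+1$.
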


\begin{proof}
In the displacement matrix $D \in \mathbb{R}^{(2n+1)\times n}$, the rows corresponding to the probe pair $\vct{x}_0 \pm \Delta_0 \vct{e}_j$ are $\pm\Delta_0 \vct{e}_j^\top$, so the $n$ columns of $D$ are linearly independent and $\operatorname{rank}(D) = n$; moreover, the all-ones column is independent of the columns of $D$, because the row of $D$ at the center sample is zero while the corresponding entry of $\mathbf{1}_{2n+1}$ is one. Hence $\operatorname{rank}([\mathbf{1}_{2n+1} \;\; D]) = n+1$.

Suppose, for contradiction, that $\vct{\theta} \in \operatorname{span}\{\mathbf{1}_{2n+1}, D\}$, i.e., $\theta_i = \alpha + \vct{\beta}^\top \vct{d}_i$ for some $\alpha \in \mathbb{R}$, $\vct{\beta} \in \mathbb{R}^n$ and all $i$. The center row gives $\alpha=\theta_1=0$. Applying the same relation to the two rows of the $j$-th probe pair, whose displacements are $\pm\Delta_0 \vct{e}_j$ and whose temporal offsets are $\theta_{2j}$ and $\theta_{2j+1}$, gives
\[
  \theta_{2j} = \alpha + \Delta_0 \beta_j,
  \qquad
  \theta_{2j+1} = \alpha - \Delta_0 \beta_j
  \qquad\Longrightarrow\qquad
  \alpha = \tfrac{1}{2}\bigl(\theta_{2j} + \theta_{2j+1}\bigr).
\]
Since the probe timestamps follow the center timestamp, $\tfrac12(\theta_{2j}+\theta_{2j+1})>0$, contradicting $\alpha=0$. Hence $\vct{\theta} \notin \operatorname{span}\{\mathbf{1}_{2n+1}, D\}$, and $\operatorname{rank}(X_{\mathrm{aug}}) = n+2$.
\end{proof}

\subsection{Affine-Drift Decoupling in Gradient Recovery}
\label{sec:drift-decoupling}

We next examine how an affine temporal component affects the recovered model parameters.

The recovery operator $\Pi = S^{-1} X_{\mathrm{aug}}^\top K^{-1}$ defined in~\eqref{eq:primal-sol} satisfies
\begin{equation}\label{eq:left-inverse}
  \Pi X_{\mathrm{aug}} \;=\; \bigl(S^{-1} X_{\mathrm{aug}}^\top K^{-1}\bigr) X_{\mathrm{aug}} \;=\; S^{-1} \bigl(X_{\mathrm{aug}}^\top K^{-1} X_{\mathrm{aug}}\bigr) \;=\; S^{-1} S \;=\; I_{n+2}.
\end{equation}
Thus, $\Pi$ is a left inverse of the augmented regressor matrix $X_{\mathrm{aug}}$.

\begin{theorem}
\label{thm:exact-decoupling}
Suppose the observation channel exhibits affine drift:
\begin{equation}\label{eq:affine-channel}
  F(\vct{x}, t) \;=\; f(\vct{x}) + \beta\,t + \varepsilon(\vct{x},t),
\end{equation}
for some unknown drift rate $\beta \in \mathbb{R}$. Let $\vct{F}^{\mathrm{stat}} := \vct{F} - \beta \vct{t} \in \mathbb{R}^m$ denote the underlying static observations that would be obtained in the absence of drift, and let $[c^{\mathrm{stat}}, (\vct{g}^{\mathrm{stat}})^\top, \gamma^{\mathrm{stat}}]^\top := \Pi \vct{F}^{\mathrm{stat}}$ be the model parameters recovered from $\vct{F}^{\mathrm{stat}}$. If $\operatorname{rank}(X_{\mathrm{aug}}) = n+2$, then the parameters recovered by TOBYQA from the drifting vector $\vct{F}$ satisfy:
\begin{equation}\label{eq:decoupling-identity}
  \vct{g} \;=\; \vct{g}^{\mathrm{stat}},
  \qquad
  \gamma \;=\; \gamma^{\mathrm{stat}} + \beta,
  \qquad
  c \;=\; c^{\mathrm{stat}} + \beta\,t_k.
\end{equation}
\end{theorem}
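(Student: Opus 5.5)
The plan is to combine the linearity of the recovery operator $\Pi$ with the left-inverse identity~\eqref{eq:left-inverse}. By Proposition~\ref{prop:solvability}, the rank hypothesis makes $M$ nonsingular, so the TOBYQA parameters are exactly $\Pi\vct{F}$. Since $\vct{F}=\vct{F}^{\mathrm{stat}}+\beta\vct{t}$, linearity gives
\[
\Pi\vct{F}=\Pi\vct{F}^{\mathrm{stat}}+\beta\,\Pi\vct{t}.
\]
It therefore suffices to compute $\Pi\vct{t}$. The key observation is that the absolute timestamp vector lies in the column space of $X_{\mathrm{aug}}$. Because $\theta_i=t_i-t_k$, we have
\[
\vct{t}=t_k\mathbf{1}_m+\vct{\theta}=X_{\mathrm{aug}}\vct{e},
\qquad
\vct{e}:=(t_k,\vct{0}^\top,1)^\top\in\mathbb{R}^{n+2},
\]
where the first entry of $\vct{e}$ multiplies the constant column, the middle $n$ entries multiply $D$, and the last entry multiplies $\vct{\theta}$.

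Applying~\eqref{eq:left-inverse} then yields
\[
\Pi\vct{t}=\Pi X_{\mathrm{aug}}\vct{e}=\vct{e}.
\]
Reading off the blocks of $\Pi\vct{F}=\Pi\vct{F}^{\mathrm{stat}}+\beta\vct{e}$ gives the three claims. The constant block gives $c=c^{\mathrm{stat}}+\beta t_k$. The gradient block gives $\vct{g}=\vct{g}^{\mathrm{stat}}$, because the middle block of $\vct{e}$ is zero. The temporal block gives $\gamma=\gamma^{\mathrm{stat}}+\beta$. These are exactly the identities~\eqref{eq:decoupling-identity}.

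No step is a real obstacle; the only point needing care is the decomposition of $\vct{t}$ in terms of the centered offsets $\vct{\theta}$ together with the constant column. It is worth remarking that the argument never uses the noise $\varepsilon$, the radius of the samples, or the value of $\rho$. The noise is simply carried inside $\vct{F}^{\mathrm{stat}}$, which is why the invariance holds for any noise scale and sample radius.
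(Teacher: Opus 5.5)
Your proposal is correct and follows essentially the same route as the paper: you write $\vct{t}=t_k\mathbf{1}_m+\vct{\theta}=X_{\mathrm{aug}}(t_k\vct{e}_1+\vct{e}_{n+2})$, use the linearity of $\Pi$, and apply the left-inverse identity $\Pi X_{\mathrm{aug}}=I_{n+2}$ from~\eqref{eq:left-inverse} to read off the three block identities. Your explicit appeal to Proposition~\ref{prop:solvability}, to justify that the recovered parameters are exactly $\Pi\vct{F}$, is a small but welcome addition that the paper leaves implicit.
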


\begin{proof}
Let $\vct{t} = (t_1, \dots, t_m)^\top \in \mathbb{R}^m$ be the vector of query timestamps. Recalling that $t_i = t_k + \theta_i$, we decompose $\vct{t}$ over the columns of $X_{\mathrm{aug}} = [\mathbf{1}_m \;\; D \;\; \vct{\theta}]$:
\begin{equation}\label{eq:t-decomp}
  \vct{t} \;=\; t_k\,\mathbf{1}_m + \vct{\theta} \;=\; X_{\mathrm{aug}} \begin{bmatrix}t_k\\\mathbf{0}_n\\1\end{bmatrix}
  \;=\; X_{\mathrm{aug}}\,(t_k \vct{e}_1 + \vct{e}_{n+2}),
\end{equation}
where $\vct{e}_1 = (1, 0, \dots, 0)^\top \in \mathbb{R}^{n+2}$ and $\vct{e}_{n+2} = (0, \dots, 0, 1)^\top \in \mathbb{R}^{n+2}$.
The observation vector evaluated under~\eqref{eq:affine-channel} can be written as $\vct{F} = \vct{F}^{\mathrm{stat}} + \beta \vct{t}$. Applying the linear recovery operator $\Pi$ to $\vct{F}$ and invoking the left-inverse identity~\eqref{eq:left-inverse}, we obtain:
\begin{align*}
  \begin{bmatrix}c\\\vct{g}\\\gamma\end{bmatrix}
  \;=\; \Pi \vct{F}
  &\;=\; \Pi \vct{F}^{\mathrm{stat}} + \beta\,\Pi \vct{t} \\
  &\;=\; \begin{bmatrix}c^{\mathrm{stat}}\\\vct{g}^{\mathrm{stat}}\\\gamma^{\mathrm{stat}}\end{bmatrix}
  + \beta\,\Pi X_{\mathrm{aug}}\,(t_k \vct{e}_1 + \vct{e}_{n+2}) \\
  &\;=\; \begin{bmatrix}c^{\mathrm{stat}}\\\vct{g}^{\mathrm{stat}}\\\gamma^{\mathrm{stat}}\end{bmatrix}
  + \beta\,(t_k \vct{e}_1 + \vct{e}_{n+2}) \\
  &\;=\; \begin{bmatrix}c^{\mathrm{stat}} + \beta t_k\\\vct{g}^{\mathrm{stat}}\\\gamma^{\mathrm{stat}} + \beta\end{bmatrix}.
\end{align*}
Equating respective components yields~\eqref{eq:decoupling-identity}.
\end{proof}

\begin{remark}
\label{rem:significance}
Theorem~\ref{thm:exact-decoupling} states that, for any drift rate $\beta$, any noise realization, and any sample geometry satisfying the augmented rank condition, adding the affine term $\beta \vct{t}$ does not change the recovered spatial gradient. The result does not depend on the particular Powell kernel used here; it requires only that $K$ be symmetric positive definite, so that $\Pi X_{\mathrm{aug}}=I_{n+2}$. In regression terms, this is an instance of the Frisch--Waugh--Lovell principle~\cite{FrischWaugh1933,Lovell1963}: the temporal column is treated as a nuisance regressor, and the spatial coefficients are recovered after accounting for that column.
\end{remark}

We next generalize the analysis to nonlinear observation drift $\mu(t)$ with bounded second derivative.

\begin{theorem}
\label{thm:nonlinear-drift}
Suppose the observation channel has the form $F(\vct{x},t) = f(\vct{x}) + \mu(t) + \varepsilon(\vct{x},t)$, where $\mu \in C^2(\mathbb{R})$ with $|\mu''(t)| \le L_\mu$ for all $t$. Let $\tau_{\max} := \max_{i} |t_i - t_k|$ denote the maximum temporal span of the active set $\mathcal{Y}_k$. Then:
\begin{equation}\label{eq:nonlinear-bounds}
  \bigl|\gamma - \gamma^{\mathrm{stat}} - \mu'(t_k)\bigr| \;\le\; C_\gamma\,L_\mu\,\tau_{\max},
  \qquad
  \bigl\|\vct{g} - \vct{g}^{\mathrm{stat}}\bigr\|_2 \;\le\; C_g\,L_\mu\,\tau_{\max}^2,
\end{equation}
where, for a fixed normalized sample geometry, the constants $C_\gamma, C_g > 0$ are independent of $\tau_{\max}$ and $L_\mu$; they depend on the normalized spatial and temporal configuration and on $\rho$.
\end{theorem}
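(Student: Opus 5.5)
Define $\vct{F}^{\mathrm{stat}} := \vct{F} - \vct{\mu}$ with $\vct{\mu} := (\mu(t_1),\dots,\mu(t_m))^\top$, in analogy with Theorem~\ref{thm:exact-decoupling}, and set $[c^{\mathrm{stat}},(\vct{g}^{\mathrm{stat}})^\top,\gamma^{\mathrm{stat}}]^\top := \Pi\vct{F}^{\mathrm{stat}}$. By linearity of $\Pi$ (Proposition~\ref{prop:solvability}), the discrepancies in~\eqref{eq:nonlinear-bounds} are components of $\Pi\vct{\mu}$.

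Taylor-expand $\mu$ about $t_k$ to get $\vct{\mu} = \mu(t_k)\mathbf{1}_m + \mu'(t_k)\vct{\theta} + \vct{r}$, where $|r_i| \le \tfrac12 L_\mu \theta_i^2 \le \tfrac12 L_\mu\tau_{\max}^2$. The affine part equals $X_{\mathrm{aug}}(\mu(t_k)\vct{e}_1 + \mu'(t_k)\vct{e}_{n+2})$. The left-inverse identity~\eqref{eq:left-inverse} therefore gives
\[
  \Pi\vct{\mu} = \mu(t_k)\vct{e}_1 + \mu'(t_k)\vct{e}_{n+2} + \Pi\vct{r}.
\]
Hence $\gamma - \gamma^{\mathrm{stat}} - \mu'(t_k) = \vct{e}_{n+2}^\top\Pi\vct{r}$ and $\vct{g} - \vct{g}^{\mathrm{stat}} = [\Pi\vct{r}]_{g}$. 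It remains to bound the $g$- and $\gamma$-rows of $\Pi$ applied to a vector of size $O(L_\mu\tau_{\max}^2)$.

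The main obstacle is obtaining the correct powers of $\tau_{\max}$. The bound on $\|\Pi\|$ must be written in a scale-free way, because the $\gamma$-row of $\Pi$ scales like $1/\tau_{\max}$; this is what produces the factor $\tau_{\max}$ rather than $\tau_{\max}^2$ in the first estimate.

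To handle the scaling, fix the normalized geometry: write $\vct{\theta} = \tau_{\max}\hat{\vct{\theta}}$ with $\|\hat{\vct{\theta}}\|_\infty = 1$, and keep the spatial displacements $D$ (hence $A$ and $K$) fixed. Then $X_{\mathrm{aug}} = \hat X\,\mathrm{diag}(1, I_n, \tau_{\max})$, where $\hat X := [\mathbf{1}_m\; D\;\hat{\vct{\theta}}]$ has full column rank. Substituting into $\Pi = S^{-1}X_{\mathrm{aug}}^\top K^{-1}$, the diagonal scaling cancels except on the last row:
\[
  \Pi = \mathrm{diag}(1, I_n, \tau_{\max}^{-1})\,\hat\Pi,
  \qquad
  \hat\Pi := (\hat X^\top K^{-1}\hat X)^{-1}\hat X^\top K^{-1}.
\]
The operator $\hat\Pi$ is independent of $\tau_{\max}$ and $L_\mu$, and depends only on the normalized configuration and on $\rho$ through $K = A + \rho^{-1}I_m$.

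Combining the pieces with $\|\vct{r}\|_2 \le \tfrac12\sqrt{m}\,L_\mu\tau_{\max}^2$ yields
\[
  \|\vct{g} - \vct{g}^{\mathrm{stat}}\|_2 \le \|[\hat\Pi]_g\|_2\,\tfrac{\sqrt m}{2}\,L_\mu\tau_{\max}^2,
  \qquad
  |\gamma - \gamma^{\mathrm{stat}} - \mu'(t_k)| \le \|\vct{e}_{n+2}^\top\hat\Pi\|_2\,\tfrac{\sqrt m}{2}\,L_\mu\tau_{\max}.
\]
This gives~\eqref{eq:nonlinear-bounds} with $C_g := \tfrac{\sqrt m}{2}\|[\hat\Pi]_g\|_2$ and $C_\gamma := \tfrac{\sqrt m}{2}\|\vct{e}_{n+2}^\top\hat\Pi\|_2$. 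Both constants are finite by Proposition~\ref{prop:solvability} applied to $\hat X$. I will state explicitly that the spatial geometry is held fixed, so that $C_g$ and $C_\gamma$ absorb the dependence on $D$.
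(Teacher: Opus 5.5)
Your proposal is correct and follows the paper's proof essentially step for step. You Taylor-expand $\mu$ about $t_k$, absorb the affine part via $\Pi X_{\mathrm{aug}} = I_{n+2}$, and factor $X_{\mathrm{aug}} = \hat X\,\mathrm{diag}(I_{n+1},\tau_{\max})$ so that only the $\gamma$-row of $\Pi$ picks up a factor $\tau_{\max}^{-1}$. The one cosmetic difference is that you use separate row-block norms of $\hat\Pi$ for $C_g$ and $C_\gamma$, whereas the paper uses the single operator norm $\|\hat S^{-1}T^\top K^{-1}\|_2$ for both, which makes your constants marginally sharper.
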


\begin{proof}
Performing a second-order Taylor expansion of $\mu(t_i)$ around the center timestamp $t_k$ gives
\[
  \mu(t_i) \;=\; \mu(t_k) + \mu'(t_k)\theta_i + r_\mu(t_i),
  \qquad
  |r_\mu(t_i)| \;\le\; \tfrac{1}{2} L_\mu \theta_i^2 \;\le\; \tfrac{1}{2} L_\mu \tau_{\max}^2.
\]
In vector notation,
\begin{align*}
  \vct{\mu} \;=\; \mu(t_k)\mathbf{1}_m + \mu'(t_k)\vct{\theta} + \vct{r}_\mu
  \;=\; X_{\mathrm{aug}}\bigl[\mu(t_k)\, \vct{e}_1 + \mu'(t_k)\, \vct{e}_{n+2}\bigr] + \vct{r}_\mu.
\end{align*}
Applying the recovery operator $\Pi$, the constant and linear terms in time are represented by the corresponding columns of $X_{\mathrm{aug}}$, leaving the perturbation induced by the second-order residual $\vct{r}_\mu$:
\[
  \begin{bmatrix}c - c^{\mathrm{stat}} - \mu(t_k)\\\vct{g} - \vct{g}^{\mathrm{stat}}\\\gamma - \gamma^{\mathrm{stat}} - \mu'(t_k)\end{bmatrix}
  \;=\; \Pi\,\vct{r}_\mu.
\]
It remains to bound the components of $\Pi\,\vct{r}_\mu$. Write $\tau := \tau_{\max}$ and factor the temporal column as $\vct{\theta} = \tau\,\vct{\hat\theta}$ with $\|\vct{\hat\theta}\|_\infty \le 1$. With
\[
  T := \begin{bmatrix}\mathbf{1}_m & D & \vct{\hat\theta}\end{bmatrix},
  \qquad
  J := \operatorname{diag}(I_{n+1},\, \tau),
  \qquad\text{so that}\qquad
  X_{\mathrm{aug}} = TJ,
\]
we have $S = X_{\mathrm{aug}}^\top K^{-1} X_{\mathrm{aug}} = J\,\hat S\,J$ with $\hat S := T^\top K^{-1} T$. The matrix $K = A + \rho^{-1}I_m$ depends only on the spatial displacements and $\rho$, so both $K$ and $\hat S$ are independent of $\tau$ under this scaling, and
\[
  \Pi\, \vct{r}_\mu
  \;=\; S^{-1} X_{\mathrm{aug}}^\top K^{-1} \vct{r}_\mu
  \;=\; J^{-1} \hat S^{-1} T^\top K^{-1} \vct{r}_\mu,
\]
where $J^{-1} = \operatorname{diag}(I_{n+1}, \tau^{-1})$. The vector $\hat S^{-1} T^\top K^{-1} \vct{r}_\mu$ is independent of $\tau$, so the scaling of the components of $\Pi \vct{r}_\mu$ is governed solely by $J^{-1}$: the $\gamma$-component carries one factor $\tau^{-1}$, while the $c$- and $\vct{g}$-components carry none. Combining this with $\|\vct{r}_\mu\|_2 \le \sqrt{m}\,\|\vct{r}_\mu\|_\infty \le \tfrac{\sqrt{m}}{2} L_\mu \tau^2$ yields
\begin{align*}
  \bigl|\gamma - \gamma^{\mathrm{stat}} - \mu'(t_k)\bigr|
  &\;\le\; \frac{\|\hat S^{-1} T^\top K^{-1}\|_2 \sqrt{m}}{2}\, L_\mu\, \tau, \\
  \bigl\|\vct{g} - \vct{g}^{\mathrm{stat}}\bigr\|_2
  &\;\le\; \frac{\|\hat S^{-1} T^\top K^{-1}\|_2 \sqrt{m}}{2}\, L_\mu\, \tau^2,
\end{align*}
which is~\eqref{eq:nonlinear-bounds} with $C_\gamma = C_g = \tfrac{\sqrt{m}}{2}\,\|\hat S^{-1} T^\top K^{-1}\|_2$, a quantity independent of $\tau$ and $L_\mu$.
\end{proof}

\begin{remark}
\label{rem:fifo-bound}
Theorem~\ref{thm:nonlinear-drift} shows that the temporal span of the active set directly controls the nonlinear-drift contribution to the recovered gradient. The time-priority replacement rule in Section~\ref{sec:arc-decision} discards the oldest non-center sample and therefore keeps the retained observations temporally compact, subject to the protected center point.
\end{remark}

\subsection{Statistical Interpretation of the Powell Residual Kernel}
\label{sec:statistical-foundation}

We next derive a statistical interpretation of the kernel matrix $K = A + \tfrac{1}{\rho}I_m$.

Recall from Section~\ref{sec:arc-decision} that the step computation uses the model with $H_k = 0$, so the surrogate actually consumed by the decision layer is linear in space: $Q_k(\vct{d}, \theta) = c + \vct{g}^\top \vct{d} + \gamma\theta$. A second-order Taylor expansion of $f$ about $\vct{x}_k$ then gives, for each sample,
\begin{equation}\label{eq:residual-decomp}
  F_i - Q_k(\vct{d}_i, \theta_i) \;=\; \tfrac{1}{2} \vct{d}_i^\top H \vct{d}_i + O(\|\vct{d}_i\|_2^3) + \varepsilon_i,
\end{equation}
where $H = \nabla^2 f(\vct{x}_k)$ is the local Hessian omitted from the decision model and $\varepsilon_i \sim \mathcal{N}(0, \sigma_\varepsilon^2)$ is white observation noise. After the zeroth- and first-order terms and the temporal component have been represented by $(c, \vct{g}, \gamma)$, spatial curvature enters the residual quadratically in the displacement. The following proposition gives the covariance of these quadratic terms under a rotation-invariant prior and relates it to Powell's kernel $A$.

\begin{proposition}
\label{prop:kernel-stat}
Let $H \in \mathbb{R}^{n\times n}$ be a random symmetric matrix drawn from a rotation-invariant ensemble with $\mathbb{E}[H] = 0$ and
\begin{equation}\label{eq:goe-prior}
  \mathbb{E}[H_{ab} H_{cd}] \;=\; \tau^2\,(\delta_{ac}\delta_{bd} + \delta_{ad}\delta_{bc}),
\end{equation}
where $\tau^2 > 0$ reflects the curvature uncertainty scale. Then for any displacement vectors $\vct{d}_i, \vct{d}_j \in \mathbb{R}^n$, the covariance between the unmodeled quadratic forms $q_i := \tfrac{1}{2} \vct{d}_i^\top H \vct{d}_i$ and $q_j := \tfrac{1}{2} \vct{d}_j^\top H \vct{d}_j$ is given by
\begin{equation}\label{eq:cov-kernel}
  \operatorname{Cov}(q_i, q_j) \;=\; \tfrac{\tau^2}{2}\,(\vct{d}_i^\top \vct{d}_j)^2 \;=\; \tau^2 A_{ij}.
\end{equation}
\end{proposition}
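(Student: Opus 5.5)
The proof is a direct second-moment computation that uses only the assumed covariance structure \eqref{eq:goe-prior}. Rotation invariance is not needed beyond what that identity already encodes.

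First, since $\mathbb{E}[H]=0$, each $q_i$ has zero mean, so $\operatorname{Cov}(q_i,q_j)=\mathbb{E}[q_iq_j]$. Expand each quadratic form in coordinates,
\[
  q_i \;=\; \tfrac12\sum_{a,b} d_{i,a}\,H_{ab}\,d_{i,b},
  \qquad
  q_j \;=\; \tfrac12\sum_{c,d'} d_{j,c}\,H_{cd'}\,d_{j,d'},
\]
where $d_{i,a}$ denotes the $a$-th component of $\vct{d}_i$. Then use bilinearity of expectation to get
\[
  \mathbb{E}[q_iq_j] \;=\; \tfrac14\sum_{a,b,c,d'} d_{i,a}d_{i,b}d_{j,c}d_{j,d'}\,\mathbb{E}[H_{ab}H_{cd'}].
\]

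Next, substitute \eqref{eq:goe-prior} and split the sum into its two Kronecker-delta terms. The term $\delta_{ac}\delta_{bd'}$ collapses the quadruple sum to $\sum_{a,b} d_{i,a}d_{j,a}\,d_{i,b}d_{j,b}=(\vct{d}_i^\top\vct{d}_j)^2$. The term $\delta_{ad'}\delta_{bc}$ gives the same value after relabeling indices. The total is therefore $\tfrac14\tau^2\cdot 2(\vct{d}_i^\top\vct{d}_j)^2=\tfrac{\tau^2}{2}(\vct{d}_i^\top\vct{d}_j)^2$. Comparing with the definition $A_{ij}=\tfrac12(\vct{d}_i^\top\vct{d}_j)^2$ in \eqref{eq:tobyqa-kkt} yields $\tau^2A_{ij}$, which is \eqref{eq:cov-kernel}.

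A matrix-form version of the same argument also works. Write $q_i=\tfrac12\langle H,\vct{d}_i\vct{d}_i^\top\rangle_F$, as in the Gram-matrix argument of Proposition~\ref{prop:solvability}(i). Then \eqref{eq:goe-prior} says that $\mathbb{E}[\langle H,U\rangle_F\langle H,V\rangle_F]=2\tau^2\langle U,V\rangle_F$ for symmetric $U,V$. Combined with $\langle \vct{d}_i\vct{d}_i^\top,\vct{d}_j\vct{d}_j^\top\rangle_F=(\vct{d}_i^\top\vct{d}_j)^2$, this gives the result immediately.

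The only real obstacle is bookkeeping. The prior \eqref{eq:goe-prior} has to be read consistently for diagonal entries, where it gives $\mathbb{E}[H_{aa}^2]=2\tau^2$, and for off-diagonal entries, where it gives $\mathbb{E}[H_{ab}^2]=\tau^2$. The factor of $2$ from the two delta terms must also be tracked so that it combines with the $\tfrac14$ from the two halves to produce exactly the $\tfrac12$ in Powell's kernel. Both points are checked directly by the index contraction above.
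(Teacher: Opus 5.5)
Your proposal is correct and takes the same approach as the paper's proof. Both expand $q_iq_j$ in coordinates, substitute the prior \eqref{eq:goe-prior}, and collapse the two Kronecker-delta terms to $2(\vct{d}_i^\top\vct{d}_j)^2$, which gives $\tfrac{\tau^2}{2}(\vct{d}_i^\top\vct{d}_j)^2=\tau^2A_{ij}$; your Frobenius inner-product version is a valid compact restatement of the same computation.
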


\begin{proof}
Writing $q_i = \tfrac{1}{2}\sum_{a,b=1}^n H_{ab} d_{i,a} d_{i,b}$, we have $\mathbb{E}[q_i] = 0$. The covariance is:
\begin{align*}
  \operatorname{Cov}(q_i, q_j)
  &\;=\; \mathbb{E}[q_i q_j]
  \;=\; \tfrac{1}{4} \sum_{a,b=1}^n \sum_{c,d=1}^n \mathbb{E}[H_{ab} H_{cd}]\,d_{i,a} d_{i,b} d_{j,c} d_{j,d} \\
  &\;=\; \tfrac{\tau^2}{4} \sum_{a,b,c,d=1}^n (\delta_{ac}\delta_{bd} + \delta_{ad}\delta_{bc})\,d_{i,a} d_{i,b} d_{j,c} d_{j,d} \\
  &\;=\; \tfrac{\tau^2}{4} \Biggl(\sum_{a,b=1}^n d_{i,a} d_{j,a} d_{i,b} d_{j,b} + \sum_{a,b=1}^n d_{i,a} d_{j,b} d_{i,b} d_{j,a}\Biggr) \\
  &\;=\; \tfrac{\tau^2}{4} \Bigl((\vct{d}_i^\top \vct{d}_j)^2 + (\vct{d}_i^\top \vct{d}_j)^2\Bigr)
  \;=\; \tfrac{\tau^2}{2} (\vct{d}_i^\top \vct{d}_j)^2
  \;=\; \tau^2 A_{ij},
\end{align*}
where the last step uses $A_{ij} = \tfrac{1}{2}(\vct{d}_i^\top \vct{d}_j)^2$. Hence the covariance of the omitted quadratic terms is $\tau^2 A$.
\end{proof}

Under Proposition~\ref{prop:kernel-stat}, treating the unmodeled curvature as a Gaussian random effect with covariance $\tau^2 A$ and observation noise as independent Gaussian with covariance $\sigma_\varepsilon^2 I_m$, the total covariance of the residual vector is
\[
  \Sigma_{\mathrm{res}} \;=\; \tau^2 A + \sigma_\varepsilon^2 I_m \;=\; \tau^2 \Bigl(A + \frac{\sigma_\varepsilon^2}{\tau^2} I_m\Bigr) \;=\; \tau^2 \bigl(A + \rho^{-1} I_m\bigr) \;=\; \tau^2 K,
\]
where $\rho = \tau^2 / \sigma_\varepsilon^2$ is the signal-to-noise ratio between curvature variation and observation noise. The generalized least squares (GLS) estimator of $[c, \vct{g}^\top, \gamma]^\top$ under residual covariance $\Sigma_{\mathrm{res}}$ is $(X_{\mathrm{aug}}^\top \Sigma_{\mathrm{res}}^{-1} X_{\mathrm{aug}})^{-1} X_{\mathrm{aug}}^\top \Sigma_{\mathrm{res}}^{-1} \vct{F} = (X_{\mathrm{aug}}^\top K^{-1} X_{\mathrm{aug}})^{-1} X_{\mathrm{aug}}^\top K^{-1} \vct{F} = \Pi \vct{F}$.

Thus, under this residual model, the soft-constrained KKT estimate~\eqref{eq:tobyqa-kkt} coincides with the generalized least-squares estimator.

\subsection{Drift-Compensated Reduction and Error Decomposition}
\label{sec:ared-decomposition}

Finally, we analyze the drift-compensated actual reduction $\mathrm{Ared}_k$ defined in~\eqref{eq:ared-def}.

\begin{proposition}
\label{prop:ared-decomp}
Suppose queries are performed through the channel $F(\vct{x},t) = \phi(f(\vct{x}),t) + \varepsilon(\vct{x},t)$. For a trial step $\vct{s}_k$ evaluated at $t_{\mathrm{new}}$ with center $\vct{x}_k$ at $t_{\mathrm{old}}$, the actual reduction has the decomposition
\begin{equation}\label{eq:ared-three-terms}
  \mathrm{Ared}_k \;=\; \Delta f(\vct{x}_k, \vct{s}_k) \;+\; \mathcal{E}_{\mathrm{drift}} \;+\; \mathcal{E}_{\mathrm{noise}},
\end{equation}
where:
\begin{enumerate}[\rm(i)]
  \item $\Delta f(\vct{x}_k, \vct{s}_k) := f(\vct{x}_k) - f(\vct{x}_k + \vct{s}_k)$ is the latent-objective reduction.
  \item $\mathcal{E}_{\mathrm{drift}} := \gamma_k (t_{\mathrm{new}} - t_{\mathrm{old}}) - \bigl(\phi(f(\vct{x}_k+\vct{s}_k), t_{\mathrm{new}}) - \phi(f(\vct{x}_k), t_{\mathrm{old}})\bigr) - \Delta f(\vct{x}_k, \vct{s}_k)$ is the temporal compensation error.
  \item $\mathcal{E}_{\mathrm{noise}} := \varepsilon(\vct{x}_k, t_{\mathrm{old}}) - \varepsilon(\vct{x}_k + \vct{s}_k, t_{\mathrm{new}})$ is the zero-mean observation noise error with variance $2\sigma_\varepsilon^2$.
\end{enumerate}
In particular, under affine drift $\phi(f(\vct{x}), t) = f(\vct{x}) + \beta t$, Theorem~\ref{thm:exact-decoupling} gives
\begin{equation}\label{eq:ared-affine-residual}
  \mathcal{E}_{\mathrm{drift}}
  \;=\; (\gamma_k-\beta)\,(t_{\mathrm{new}}-t_{\mathrm{old}})
  \;=\; \gamma_k^{\mathrm{stat}}\,(t_{\mathrm{new}}-t_{\mathrm{old}}).
\end{equation}
Thus the contribution of the affine drift rate $\beta$ cancels from $\mathcal{E}_{\mathrm{drift}}$. A residual offset may remain because the drift-free spatial data can induce a nonzero coefficient $\gamma_k^{\mathrm{stat}}$ through space--time correlation in the sequential sample geometry.
\end{proposition}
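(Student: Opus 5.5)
The decomposition~\eqref{eq:ared-three-terms} is an algebraic identity, so the plan is to substitute the channel model into the definition~\eqref{eq:ared-def} and regroup terms. Write $F_{\mathrm{old}} = \phi(f(\vct{x}_k),t_{\mathrm{old}}) + \varepsilon(\vct{x}_k,t_{\mathrm{old}})$ and $F_{\mathrm{new}} = \phi(f(\vct{x}_k+\vct{s}_k),t_{\mathrm{new}}) + \varepsilon(\vct{x}_k+\vct{s}_k,t_{\mathrm{new}})$. Then
\[
\mathrm{Ared}_k = \bigl[\phi(f(\vct{x}_k),t_{\mathrm{old}}) - \phi(f(\vct{x}_k+\vct{s}_k),t_{\mathrm{new}}) + \gamma_k(t_{\mathrm{new}}-t_{\mathrm{old}})\bigr] + \mathcal{E}_{\mathrm{noise}}.
\]
Adding and subtracting $\Delta f(\vct{x}_k,\vct{s}_k)$ inside the bracket reproduces exactly $\Delta f + \mathcal{E}_{\mathrm{drift}}$ with $\mathcal{E}_{\mathrm{drift}}$ as defined in (ii). In other words, $\mathcal{E}_{\mathrm{drift}}$ is defined as the remainder, so (i)--(ii) hold by construction.

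For (iii), I would use the standing assumption on the oracle~\eqref{eq:oracle_intro}: the noise terms are independent and zero-mean, each with variance $\sigma_\varepsilon^2$. Here I take the bound $\sigma_\varepsilon^2$ to be attained, consistent with $R_{\mathrm{obs}}=\sigma_\varepsilon^2$ in~\eqref{eq:noise-gate}. The two evaluations occur at distinct timestamps $t_{\mathrm{old}}\neq t_{\mathrm{new}}$, so they are independent. Hence $\mathbb{E}[\mathcal{E}_{\mathrm{noise}}]=0$ and $\operatorname{Var}(\mathcal{E}_{\mathrm{noise}})=2\sigma_\varepsilon^2$, since the cross term vanishes.

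For the affine specialization, substitute $\phi(y,t)=y+\beta t$. The bracket $\phi(f(\vct{x}_k+\vct{s}_k),t_{\mathrm{new}})-\phi(f(\vct{x}_k),t_{\mathrm{old}})$ equals $-\Delta f + \beta(t_{\mathrm{new}}-t_{\mathrm{old}})$. Therefore $\mathcal{E}_{\mathrm{drift}} = \gamma_k\Delta t + \Delta f - \beta\Delta t - \Delta f = (\gamma_k-\beta)\Delta t$. Next, invoke Theorem~\ref{thm:exact-decoupling}, which requires the rank condition of Definition~\ref{def:rank} on $\mathcal{Y}_k$. That condition holds initially by Corollary~\ref{cor:axis-rank}, and I assume it is maintained. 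The theorem gives $\gamma_k=\gamma_k^{\mathrm{stat}}+\beta$, which yields~\eqref{eq:ared-affine-residual}. Because $\gamma_k^{\mathrm{stat}}=[\Pi\vct{F}^{\mathrm{stat}}]_{n+2}$ depends only on the drift-free data, $\beta$ has cancelled.

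The only delicate point is bookkeeping rather than analysis. The theorem is stated with $\vct{F}^{\mathrm{stat}}=\vct{F}-\beta\vct{t}$ measured in absolute timestamps, while $\gamma_k$ multiplies the increment $t_{\mathrm{new}}-t_{\mathrm{old}}$. I need to check that the offset term $c=c^{\mathrm{stat}}+\beta t_k$ plays no role here, which it does not, since only $\gamma_k$ enters $\mathrm{Ared}_k$. I also need $\gamma_k$ to be computed from the same active set to which the theorem is applied, namely the set used at line~7 of Algorithm~\ref{alg:tobyqa} before the trial evaluation. The closing remark, that $\gamma_k^{\mathrm{stat}}$ may be nonzero, needs no proof; at most one can note that $\Pi$ is a GLS projection and that $\vct{\theta}$ is generally not $K^{-1}$-orthogonal to $[\mathbf{1}_m\ D]$.
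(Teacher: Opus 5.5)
Your proposal is correct and follows the paper's own argument. You substitute the channel model into the definition of $\mathrm{Ared}_k$ and regroup, so that $\mathcal{E}_{\mathrm{drift}}$ is the remainder after extracting $\Delta f$ and the noise difference, and in the affine case you use $\gamma_k=\gamma_k^{\mathrm{stat}}+\beta$ from Theorem~\ref{thm:exact-decoupling}; your explicit mention of the rank condition on $\mathcal{Y}_k$ and of the active set from which $\gamma_k$ is computed makes precise what the paper leaves implicit. One caveat, which the paper also glosses over: (iii) holds only with $(\vct{x}_k,\vct{s}_k,t_{\mathrm{old}},t_{\mathrm{new}})$ held fixed, because along the algorithm's trajectory the incumbent $\vct{x}_k$ was selected partly on its stored value $F_{\mathrm{old}}$, so $\varepsilon(\vct{x}_k,t_{\mathrm{old}})$ need not be unconditionally zero-mean.
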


\begin{proof}
Substituting the channel expressions for $F_{\mathrm{old}}$ and $F_{\mathrm{new}}$ into~\eqref{eq:ared-def} and grouping the spatial, temporal, and noise terms yields~\eqref{eq:ared-three-terms}. Under affine drift, the temporal difference contributes $\beta(t_{\mathrm{new}}-t_{\mathrm{old}})$, while Theorem~\ref{thm:exact-decoupling} gives $\gamma_k=\gamma_k^{\mathrm{stat}}+\beta$. Substitution yields~\eqref{eq:ared-affine-residual}.
\end{proof}

\section{Convergence and Complexity Analysis}
\label{sec:complexity}

In this section, we establish the convergence properties and expected oracle complexity of TOBYQA under probabilistically fully-linear models.

\subsection{Standing Assumptions and Stochastic Framework}
\label{sec:assumptions}

We analyze TOBYQA under the classical framework of probabilistically fully-linear models~\cite{ChenMenickellyScheinberg2018,CartisRoberts2019,BellaviaGurioliMoriniToint2019}. Let $\mathcal{F}_{k-1} := \sigma(\vct{x}_0, \dots, \vct{x}_k, \mathcal{Y}_0, \dots, \mathcal{Y}_k)$ denote the filtration representing the history of the algorithm up to the beginning of iteration $k$. We impose the following standard assumptions on the latent objective function and the observation channel.

\begin{assumption}
\label{ass:smoothness}
The objective function $f: \mathbb{R}^n \to \mathbb{R}$ is continuously differentiable with $L_g$-Lipschitz continuous gradient:
\begin{equation}\label{eq:grad-lipschitz}
  \|\nabla f(\vct{x}) - \nabla f(\vct{y})\|_2 \;\le\; L_g\,\|\vct{x} - \vct{y}\|_2, \qquad \forall\, \vct{x}, \vct{y} \in \mathbb{R}^n,
\end{equation}
and is bounded from below by $f_{\mathrm{low}} > -\infty$ on $\mathbb{R}^n$.
\end{assumption}

\begin{assumption}
\label{ass:radius-bounds}
\textup{\textbf{(Active-set radius bounds.)}}
There exist constants $0<\delta_{\mathrm{lo}}\le\delta_{\mathrm{hi}}<\infty$ such that, at every iteration prior to termination,
\begin{equation}\label{eq:radius-bounds}
  \delta_{\mathrm{lo}}\;\le\;\delta_k\;\le\;\delta_{\mathrm{hi}},
  \qquad
  \delta_k:=\max_{i=1,\dots,m}\|\vct{x}_i-\vct{x}_k\|_2.
\end{equation}
The time-priority update and extrapolation guard used by the implementation do not by themselves enforce these two bounds; they are an explicit regularity assumption for the complexity analysis.
\end{assumption}

\begin{assumption}
\label{ass:fully-linear}
\textup{\textbf{(Probabilistically fully-linear gradient.)}}
There exist constants $\kappa_{eg} > 0$ and failure probability $p \in [0, \tfrac{1}{2})$ such that, for every iteration $k \ge 0$, conditionally on $\mathcal{F}_{k-1}$, the recovered model gradient $\vct{g}_k$ satisfies
\begin{equation}\label{eq:grad-error-bound}
  \mathbb{P}\Bigl(\|\vct{g}_k - \nabla f(\vct{x}_k)\|_2 \;\le\; \kappa_{eg}\,\zeta_k \;\Big|\; \mathcal{F}_{k-1}\Bigr) \;\ge\; 1 - p,
\end{equation}
where $\zeta_k := \max\{\delta_k, \sigma_\varepsilon / \delta_k\}$ and $\delta_k$ is the active sample radius in~\eqref{eq:radius-bounds}.
\end{assumption}

Assumption~\ref{ass:fully-linear} is the standard probabilistic accuracy condition used in the analysis. Its error scale combines the sample radius $\delta_k$, which controls spatial truncation, and the ratio $\sigma_\varepsilon / \delta_k$, which accounts for noise amplification.

For the observation channel, Proposition~\ref{prop:ared-decomp} and Theorem~\ref{thm:exact-decoupling} show that the affine drift rate is represented by the temporal coefficient and does not enter the recovered spatial gradient. They do not imply that the drift-free time coefficient $\gamma_k^{\mathrm{stat}}$ vanishes. Sequential space--time correlation can produce a nonzero $\gamma_k^{\mathrm{stat}}$ even without observation noise. We therefore define the conditional acceptance bias
\begin{equation}\label{eq:ared-expectation}
  b_k \;:=\; \mathbb{E}[\mathrm{Ared}_k \mid \mathcal{F}_{k-1}]
  - \bigl(f(\vct{x}_k)-f(\vct{x}_k+\vct{s}_k)\bigr).
\end{equation}
Under affine drift, relation~\eqref{eq:ared-affine-residual} identifies the deterministic space--time aliasing contribution as $\gamma_k^{\mathrm{stat}}\Delta t_k$; under $C^2$ drift, $b_k$ also contains the nonlinear temporal truncation error. The following assumption controls this conditional bias and the remaining stochastic innovation.

\begin{assumption}
\label{ass:drift-regime}
\textup{\textbf{(Acceptance-channel residual.)}}
There exists $\varepsilon > 0$ such that, whenever the incumbent satisfies $\|\nabla f(\vct{x}_k)\|_2 \ge \varepsilon$, the conditional bias in~\eqref{eq:ared-expectation} obeys
\begin{equation}\label{eq:residual-drift-bound}
  |b_k| \;\le\; \tfrac{1}{4}\,\mathrm{Pred}_k,
\end{equation}
and the centered innovation $\mathrm{Ared}_k-\mathbb{E}[\mathrm{Ared}_k\mid\mathcal F_{k-1}]$ is conditionally sub-Gaussian with variance proxy $2\sigma_\varepsilon^2$.
\end{assumption}

\subsection{Reliable Regularization Regime and Step Decrease}
\label{sec:reliable-regime}

Recall from Section~\ref{sec:arc-decision} that the Zero-Hessian closed-form cubic trial step $\vct{s}_k$ and the corresponding predicted reduction $\mathrm{Pred}_k$ satisfy:
\begin{equation}\label{eq:step-pred-relations}
  \vct{s}_k \;=\; -\frac{\vct{g}_k}{\sqrt{\sigma_k \|\vct{g}_k\|_2}},
  \qquad
  \mathrm{Pred}_k \;=\; \|\vct{g}_k\|_2^{3/2}\,\sigma_k^{-1/2} \;=\; \|\vct{g}_k\|_2\,\|\vct{s}_k\|_2,
\end{equation}
with $\|\vct{s}_k\|_2 = \sqrt{\|\vct{g}_k\|_2 / \sigma_k}$.
We first give a sufficient lower bound on the cubic regularization weight $\sigma_k$ for decrease in the latent objective $f$.

\begin{lemma}
\label{lem:reliable-sigma}
\textup{\textbf{(Descent under sufficient regularization.)}}
Let Assumption~\ref{ass:smoothness} hold. If the cubic regularization weight satisfies
\begin{equation}\label{eq:sigma-threshold}
  \sigma_k \;\ge\; \sigma^*(\vct{g}_k) \;:=\; \frac{4L_g^2}{\|\vct{g}_k\|_2},
\end{equation}
then the objective decrease along the candidate trial step $\vct{s}_k$ satisfies
\begin{equation}\label{eq:true-decrease-bound}
  f(\vct{x}_k) - f(\vct{x}_k + \vct{s}_k) \;\ge\; \tfrac{3}{4}\,\mathrm{Pred}_k \;-\; \|\vct{g}_k - \nabla f(\vct{x}_k)\|_2\,\|\vct{s}_k\|_2.
\end{equation}
\end{lemma}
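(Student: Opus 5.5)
Use the descent lemma from Assumption~\ref{ass:smoothness} along the step $\vct{s}_k$, then split the gradient term into the model gradient plus the model error. By $L_g$-Lipschitz continuity of $\nabla f$,
\[
  f(\vct{x}_k+\vct{s}_k) \;\le\; f(\vct{x}_k) + \nabla f(\vct{x}_k)^\top \vct{s}_k + \tfrac{L_g}{2}\|\vct{s}_k\|_2^2 .
\]
Write $\nabla f(\vct{x}_k) = \vct{g}_k + (\nabla f(\vct{x}_k)-\vct{g}_k)$ and apply Cauchy--Schwarz to the second piece. Using $-\vct{g}_k^\top\vct{s}_k=\mathrm{Pred}_k$ from~\eqref{eq:pred-def}, this gives
\[
  f(\vct{x}_k)-f(\vct{x}_k+\vct{s}_k) \;\ge\; \mathrm{Pred}_k - \|\vct{g}_k-\nabla f(\vct{x}_k)\|_2\|\vct{s}_k\|_2 - \tfrac{L_g}{2}\|\vct{s}_k\|_2^2 .
\]
Note that $-\vct{g}_k^\top \vct{s}_k = \mathrm{Pred}_k$ holds exactly for any step that is a positive multiple of $-\vct{g}_k$. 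It therefore also holds for the capped step, since $\mathrm{Pred}_k$ is recomputed there. If $\vct{g}_k=0$ the statement is vacuous, so assume $\vct{g}_k\neq 0$.

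What remains is to show $\tfrac{L_g}{2}\|\vct{s}_k\|_2^2 \le \tfrac14 \mathrm{Pred}_k$. Since $\mathrm{Pred}_k=\|\vct{g}_k\|_2\|\vct{s}_k\|_2$, this is equivalent to $\|\vct{s}_k\|_2 \le \|\vct{g}_k\|_2/(2L_g)$. For the uncapped step, $\|\vct{s}_k\|_2=\sqrt{\|\vct{g}_k\|_2/\sigma_k}$. The threshold~\eqref{eq:sigma-threshold} gives $\sigma_k\ge 4L_g^2/\|\vct{g}_k\|_2$, hence
\[
  \|\vct{s}_k\|_2^2 \;\le\; \frac{\|\vct{g}_k\|_2^2}{4L_g^2},
\]
which is exactly the required bound. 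The constant $4L_g^2$ in $\sigma^*$ is evidently chosen so that this holds with equality at the threshold. For the capped step the length only decreases, so the bound $\|\vct{s}_k\|_2\le\|\vct{g}_k\|_2/(2L_g)$ still holds. The identity $\mathrm{Pred}_k=\|\vct{g}_k\|_2\|\vct{s}_k\|_2$ also remains valid, because the step stays collinear with $-\vct{g}_k$.

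Substituting $\tfrac{L_g}{2}\|\vct{s}_k\|_2^2\le\tfrac14\mathrm{Pred}_k$ into the inequality above yields~\eqref{eq:true-decrease-bound}. The only point needing care is the cap case: one must check that the inequality is stated with the recomputed $\mathrm{Pred}_k$ and the truncated $\vct{s}_k$, so that both identities used above still hold. Beyond that there is no real obstacle. The argument is a direct combination of the descent lemma with the closed-form step length~\eqref{eq:closed-form-step}.
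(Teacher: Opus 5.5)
Your proof is correct and follows the paper's argument. Like the paper, you apply the descent lemma, split $\nabla f(\vct{x}_k)=\vct{g}_k+(\nabla f(\vct{x}_k)-\vct{g}_k)$ with Cauchy--Schwarz, and then derive $\tfrac{L_g}{2}\|\vct{s}_k\|_2^2\le\tfrac14\mathrm{Pred}_k$ from the threshold on $\sigma_k$; the paper writes this as the ratio bound $L_g/(2\sqrt{\sigma_k\|\vct{g}_k\|_2})\le\tfrac14$, which is equivalent to your $\|\vct{s}_k\|_2\le\|\vct{g}_k\|_2/(2L_g)$, and your extension to the capped step (which stays collinear with $-\vct{g}_k$ and only gets shorter) is a correct detail the paper leaves implicit.
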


\begin{proof}
By the classical descent lemma for $L_g$-smooth functions (Assumption~\ref{ass:smoothness}), we have
\begin{align*}
  f(\vct{x}_k + \vct{s}_k) 
  &\;\le\; f(\vct{x}_k) + \nabla f(\vct{x}_k)^\top \vct{s}_k + \tfrac{L_g}{2}\|\vct{s}_k\|_2^2 \\
  &\;=\; f(\vct{x}_k) + \vct{g}_k^\top \vct{s}_k + \tfrac{L_g}{2}\|\vct{s}_k\|_2^2 + \bigl(\nabla f(\vct{x}_k) - \vct{g}_k\bigr)^\top \vct{s}_k.
\end{align*}
Using $\vct{g}_k^\top \vct{s}_k = -\|\vct{g}_k\|_2 \|\vct{s}_k\|_2 = -\mathrm{Pred}_k$ and Cauchy--Schwarz, rearranging terms yields
\begin{equation}\label{eq:descent-intermediate}
  f(\vct{x}_k) - f(\vct{x}_k + \vct{s}_k) \;\ge\; \mathrm{Pred}_k \;-\; \tfrac{L_g}{2}\|\vct{s}_k\|_2^2 \;-\; \|\vct{g}_k - \nabla f(\vct{x}_k)\|_2\,\|\vct{s}_k\|_2.
\end{equation}
We now evaluate the ratio of the quadratic curvature penalty $\tfrac{L_g}{2}\|\vct{s}_k\|_2^2$ to the predicted decrease $\mathrm{Pred}_k$. Substituting the closed-form identities~\eqref{eq:step-pred-relations} gives:
\begin{equation}\label{eq:curvature-ratio}
  \frac{\tfrac{L_g}{2}\|\vct{s}_k\|_2^2}{\mathrm{Pred}_k}
  \;=\;
  \frac{\tfrac{L_g}{2}\bigl(\|\vct{g}_k\|_2 / \sigma_k\bigr)}{\|\vct{g}_k\|_2^{3/2} \sigma_k^{-1/2}}
  \;=\;
  \frac{L_g}{2\sqrt{\sigma_k \|\vct{g}_k\|_2}}.
\end{equation}
Under condition~\eqref{eq:sigma-threshold}, we have $\sigma_k \|\vct{g}_k\|_2 \ge 4L_g^2$, which implies $\sqrt{\sigma_k \|\vct{g}_k\|_2} \ge 2L_g$. Substituting this inequality into~\eqref{eq:curvature-ratio} yields
\[
  \frac{\tfrac{L_g}{2}\|\vct{s}_k\|_2^2}{\mathrm{Pred}_k} \;\le\; \frac{L_g}{2(2L_g)} \;=\; \tfrac{1}{4},
  \qquad\text{whence}\qquad
  \tfrac{L_g}{2}\|\vct{s}_k\|_2^2 \;\le\; \tfrac{1}{4}\,\mathrm{Pred}_k.
\]
Substituting $\tfrac{L_g}{2}\|\vct{s}_k\|_2^2 \le \tfrac{1}{4}\mathrm{Pred}_k$ into~\eqref{eq:descent-intermediate} yields~\eqref{eq:true-decrease-bound}, completing the proof.
\end{proof}

\begin{remark}
\label{rem:feedback-control}
\textup{\textbf{(Interpretation of the regularization scale.)}}
Condition~\eqref{eq:sigma-threshold} identifies the scale at which the curvature term in the descent bound is at most one quarter of the predicted reduction. In Algorithm~\ref{alg:tobyqa}, changing $\sigma_k$ changes the step length through $\|\vct{s}_k\|_2=\sqrt{\|\vct{g}_k\|_2/\sigma_k}$: rejection increases $\sigma_k$, whereas an accepted step with $\rho_k>\eta$ decreases it. Thus $\sigma_k$ controls the step length without a separately maintained trust-region radius.
\end{remark}

\subsection{Rejection Probability and Regularization Confinement}
\label{sec:supermartingale}

\begin{lemma}
\label{lem:reject-prob}
\textup{\textbf{(Rejection probability in the reliable regime.)}}
Let Assumption~\ref{ass:smoothness} hold, let the residual drift bound~\eqref{eq:residual-drift-bound} hold at iteration $k$, and suppose that
\begin{equation}\label{eq:reliable-conditions}
  \sigma_k \;\ge\; \gamma_2\,\sigma^*(\vct{g}_k)
  \qquad\text{and}\qquad
  \|\vct{g}_k - \nabla f(\vct{x}_k)\|_2\,\|\vct{s}_k\|_2 \;\le\; \tfrac{1}{4}\,\mathrm{Pred}_k.
\end{equation}
Then the conditional rejection probability obeys
\begin{equation}\label{eq:reject-prob}
  \mathbb{P}\bigl(\text{iteration } k \text{ is rejected} \,\big|\, \mathcal{F}_{k-1}\bigr)
  \;\le\;
  e^{-\kappa_{\mathrm{tol}}^2/8}
  \;=:\; p',
\end{equation}
and $p' \le \tfrac{1}{2}$ whenever $\kappa_{\mathrm{tol}} \ge \sqrt{8\ln 2}$.
\end{lemma}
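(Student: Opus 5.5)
The plan is to show that, under the two conditions in~\eqref{eq:reliable-conditions}, the conditional mean of $\mathrm{Ared}_k$ is nonnegative. Rejection then requires the centered sub-Gaussian innovation to fall below the gate $-\tfrac{\kappa_{\mathrm{tol}}}{2}\sqrt{2R_{\mathrm{obs}}}$, and a Chernoff bound controls that event.

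\textbf{Step 1: latent decrease.} Since $\gamma_2>1$, the first condition gives $\sigma_k\ge\sigma^*(\vct{g}_k)$. Lemma~\ref{lem:reliable-sigma} therefore applies and, combined with the second condition in~\eqref{eq:reliable-conditions}, yields
\[
  f(\vct{x}_k)-f(\vct{x}_k+\vct{s}_k)\;\ge\;\tfrac34\mathrm{Pred}_k-\tfrac14\mathrm{Pred}_k\;=\;\tfrac12\mathrm{Pred}_k.
\]
One caveat concerns the extrapolation cap. If the cap is active, the truncated step is shorter than the closed-form step, but it is still parallel to $-\vct{g}_k$. In that case the ratio $\tfrac{L_g}{2}\|\vct{s}_k\|_2^2/\mathrm{Pred}_k=\tfrac{L_g}{2}\|\vct{s}_k\|_2/\|\vct{g}_k\|_2$ only decreases, so the same bound survives. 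I would note this in one line.

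\textbf{Step 2: conditional mean.} By definition~\eqref{eq:ared-expectation} and the bias bound~\eqref{eq:residual-drift-bound},
\[
  \mathbb{E}[\mathrm{Ared}_k\mid\mathcal F_{k-1}]\;\ge\;\tfrac12\mathrm{Pred}_k-\tfrac14\mathrm{Pred}_k\;=\;\tfrac14\mathrm{Pred}_k\;\ge\;0.
\]
Here $\mathrm{Pred}_k$ and $\vct{s}_k$ are treated as determined by the information available when the trial point is queried, i.e.\ measurable with respect to the conditioning.

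\textbf{Step 3: tail bound.} Write $Z:=\mathrm{Ared}_k-\mathbb{E}[\mathrm{Ared}_k\mid\mathcal F_{k-1}]$. Rejection means $\mathrm{Ared}_k<-\tfrac{\kappa_{\mathrm{tol}}}{2}\sqrt{2\sigma_\varepsilon^2}$, which by Step 2 implies $Z<-\tfrac{\kappa_{\mathrm{tol}}}{2}\sqrt{2}\,\sigma_\varepsilon$. Assumption~\ref{ass:drift-regime} makes $Z$ conditionally sub-Gaussian with variance proxy $v=2\sigma_\varepsilon^2$, so $\mathbb{P}(Z<-a)\le e^{-a^2/(2v)}$. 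With $a^2=\tfrac{\kappa_{\mathrm{tol}}^2}{4}\cdot 2\sigma_\varepsilon^2$ this gives
\[
  \mathbb{P}(\text{rejected}\mid\mathcal F_{k-1})\;\le\;\exp\!\Bigl(-\tfrac{\kappa_{\mathrm{tol}}^2\sigma_\varepsilon^2/2}{4\sigma_\varepsilon^2}\Bigr)\;=\;e^{-\kappa_{\mathrm{tol}}^2/8},
\]
which is exactly $p'$. The final claim is immediate: $e^{-\kappa_{\mathrm{tol}}^2/8}\le\tfrac12$ is equivalent to $\kappa_{\mathrm{tol}}^2\ge 8\ln2$.

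\textbf{Main obstacle.} The arithmetic is routine; the delicate point is conditioning. The hypotheses in~\eqref{eq:reliable-conditions} involve $\vct{g}_k$, which is random given $\mathcal F_{k-1}$. I would therefore read the lemma as a statement on the event where these conditions and~\eqref{eq:residual-drift-bound} hold. The natural way to make this precise is to argue conditionally on the sigma-algebra that also contains $\vct{g}_k$, with only the fresh noise $\varepsilon(\vct{x}_k+\vct{s}_k,t_{\mathrm{new}})$ and the incumbent's noise entering $Z$, consistent with how Assumption~\ref{ass:drift-regime} is phrased. I would state this interpretation explicitly rather than try to remove it. A further edge case is $\sigma_\varepsilon=0$: then the gate is $0$, $Z=0$ almost surely, and rejection has probability zero, so the bound holds trivially.
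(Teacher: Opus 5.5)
Your proposal is correct and follows the paper's proof essentially step for step: invoke Lemma~\ref{lem:reliable-sigma} via $\gamma_2>1$ to get $\Delta f_k\ge\tfrac12\mathrm{Pred}_k$, add the bias bound to get a conditional mean of at least $\tfrac14\mathrm{Pred}_k\ge 0$, and apply the sub-Gaussian lower-tail bound with variance proxy $2\sigma_\varepsilon^2$ at the gate to obtain $e^{-\kappa_{\mathrm{tol}}^2/8}$. Your side remarks are sound refinements that the paper leaves implicit: the capped step still satisfies the curvature-ratio bound, the case $\sigma_\varepsilon=0$ is trivial, and $\vct{g}_k$ is measurable, which the paper's filtration already guarantees because $\mathcal{F}_{k-1}$ contains $\mathcal{Y}_k$.
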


\begin{proof}
Under $\sigma_k \ge \gamma_2\sigma^*(\vct{g}_k)$ we have $\sigma_k\|\vct{g}_k\|_2 \ge 4L_g^2$, so Lemma~\ref{lem:reliable-sigma} gives $\Delta f_k \ge \tfrac{3}{4}\mathrm{Pred}_k - \|\vct{g}_k - \nabla f(\vct{x}_k)\|_2\|\vct{s}_k\|_2 \ge \tfrac{1}{2}\mathrm{Pred}_k$. Together with~\eqref{eq:residual-drift-bound} and the expectation~\eqref{eq:ared-expectation}, this yields
\[
  \mathbb{E}[\mathrm{Ared}_k \mid \mathcal{F}_{k-1}]
  \;=\; \Delta f_k + b_k
  \;\ge\; \tfrac{1}{2}\mathrm{Pred}_k - \tfrac{1}{4}\mathrm{Pred}_k
  \;=\; \tfrac{1}{4}\mathrm{Pred}_k \;>\; 0.
\]
The deviation $\mathrm{Ared}_k - \mathbb{E}[\mathrm{Ared}_k \mid \mathcal{F}_{k-1}]$ is sub-Gaussian with variance proxy $2\sigma_\varepsilon^2 = 2R_{\mathrm{obs}}$. Hence, by the sub-Gaussian tail bound and the noise gate~\eqref{eq:noise-gate},
\begin{align*}
  \mathbb{P}\bigl(\mathrm{Ared}_k < -\tfrac{\kappa_{\mathrm{tol}}}{2}\sqrt{2R_{\mathrm{obs}}} \,\big|\, \mathcal{F}_{k-1}\bigr)
  &\;\le\;
  \exp\Bigl(-\frac{\bigl(\tfrac{\kappa_{\mathrm{tol}}}{2}\sqrt{2R_{\mathrm{obs}}} + \tfrac{1}{4}\mathrm{Pred}_k\bigr)^2}{4R_{\mathrm{obs}}}\Bigr) \\
  &\;\le\;
  \exp\Bigl(-\frac{\tfrac{\kappa_{\mathrm{tol}}^2}{4}\,2R_{\mathrm{obs}}}{4R_{\mathrm{obs}}}\Bigr)
  \;=\; e^{-\kappa_{\mathrm{tol}}^2/8},
\end{align*}
where the last inequality uses $\mathrm{Pred}_k > 0$, and $p' \le \tfrac{1}{2}$ for $\kappa_{\mathrm{tol}} \ge \sqrt{8\ln 2}$ since $e^{-\ln 2} = \tfrac{1}{2}$.
\end{proof}

Lemma~\ref{lem:reject-prob} bounds the rejection probability in the reliable regime. An almost-sure upper bound on $\sigma_k$ would additionally require a uniform lower bound on the probability of an acceptance with $\rho_k>\eta$, which is not included in Assumptions~\ref{ass:smoothness}--\ref{ass:drift-regime}. We therefore impose regularization confinement as a standing assumption.

\begin{assumption}
\label{ass:sigma-confinement}
\textup{\textbf{(Regularization confinement.)}}
For the target accuracy $\varepsilon > 0$ with $\kappa_{\mathrm{tol}} \ge \sqrt{8\ln 2}$, the regularization parameter satisfies
\begin{equation}\label{eq:sigma-max}
  \sigma_k \;\le\; \sigma_{\max} \;:=\; \gamma_2\,\max\Biggl\{\sigma_0,\; \frac{8L_g^2}{\varepsilon}\Biggr\},
  \qquad \text{almost surely,}
\end{equation}
throughout all iterations $k$ prior to termination, i.e., while $\min_{j \le k} \|\nabla f(\vct{x}_j)\|_2 \ge \varepsilon$.
\end{assumption}

\begin{proposition}
\label{prop:rejected-bound}
\textup{\textbf{(Bound on unsuccessful iterations.)}}
Let $\mathcal{S}_K := \{k \le K : \text{iteration } k \text{ is accepted}\}$ and $\mathcal{U}_K := \{k \le K : \text{iteration } k \text{ is rejected}\}$ denote the index sets of successful and unsuccessful iterations up to iteration $K$, respectively. Then:
\begin{equation}\label{eq:rejected-count}
  |\mathcal{U}_K| \;\le\; \frac{\log \gamma_1}{\log \gamma_2}\,|\mathcal{S}_K| \;+\; \frac{1}{\log \gamma_2}\log\frac{\sigma_{\max}}{\sigma_0}.
\end{equation}
\end{proposition}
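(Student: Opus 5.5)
The argument is the standard log-potential counting for adaptive-regularization schemes, applied to $\log\sigma_k$. I will track how $\log\sigma_k$ changes at each iteration. I then sum these changes from $k=0$ to $K$ and use Assumption~\ref{ass:sigma-confinement} to bound the telescoped total from above.

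First, I read the update rules off Algorithm~\ref{alg:tobyqa}. On a rejected iteration $k\in\mathcal{U}_K$, $\sigma_{k+1}=\gamma_2\sigma_k$, so $\log\sigma_{k+1}-\log\sigma_k=\log\gamma_2>0$. On an accepted iteration $k\in\mathcal{S}_K$, either $\sigma_{k+1}=\sigma_k/\gamma_1$ (when $\rho_k>\eta$) or $\sigma_{k+1}=\sigma_k$. In both cases $\log\sigma_{k+1}-\log\sigma_k\ge-\log\gamma_1$.

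Summing over $k=0,\dots,K$ gives
\[
\log\sigma_{K+1}-\log\sigma_0 \;\ge\; |\mathcal{U}_K|\log\gamma_2-|\mathcal{S}_K|\log\gamma_1 .
\]
Assumption~\ref{ass:sigma-confinement} supplies $\sigma_{K+1}\le\sigma_{\max}$. Dividing by $\log\gamma_2>0$ then yields \eqref{eq:rejected-count}. Note that $\sigma_{\max}\ge\gamma_2\sigma_0>\sigma_0$, so the logarithmic term is positive.

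The main obstacle is an indexing subtlety. Assumption~\ref{ass:sigma-confinement} bounds $\sigma_k$ only for iterations $k$ prior to termination, so I must make sure the bound applies to the final value $\sigma_{K+1}$. If iteration $K+1$ is still pre-termination, the assumption applies directly. Otherwise I will sum only up to $K-1$, which uses $\sigma_K\le\sigma_{\max}$. The last iteration then contributes at most one extra rejection, costing a term of $\log\gamma_2$. To avoid that extra term, I will interpret $\sigma_{K+1}$ as the value computed before the loop exits, to which confinement still applies by definition. Alternatively, I can state the count for $K$ strictly below the termination index. I would adopt the latter convention and note it explicitly, since the remaining steps are routine.
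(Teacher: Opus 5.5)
Your argument is correct and is the same log-potential telescoping on $\log\sigma_k$ that the paper uses; the paper phrases the accepted-step contribution through the count $N_1\le|\mathcal{S}_K|$ of accepted steps with $\rho_k>\eta$. The paper also telescopes only up to $\sigma_K$, glossing over the boundary iteration you flag. That worry resolves without any extra $\log\gamma_2$ term or special convention. If iteration $K$ is pre-termination and rejected, then $\vct{x}_{K+1}=\vct{x}_K$, so $K+1$ is still pre-termination and Assumption~\ref{ass:sigma-confinement} gives $\sigma_{K+1}\le\sigma_{\max}$; if it is accepted, then $\sigma_{K+1}\le\sigma_K\le\sigma_{\max}$.
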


\begin{proof}
From the update rule in Algorithm~\ref{alg:tobyqa}, the weight decreases by $\gamma_1$ only on accepted iterations with $\rho_k > \eta$, is held fixed on accepted iterations with $\rho_k \le \eta$, and increases by $\gamma_2$ on rejected iterations. Let $N_1 \le |\mathcal{S}_K|$ denote the number of accepted iterations on which $\rho_k > \eta$. Taking logarithms and telescoping from $k = 0$ to $K-1$ gives
\[
  \log \sigma_K \;=\; \log \sigma_0 \;-\; N_1\,\log \gamma_1 \;+\; |\mathcal{U}_K|\,\log \gamma_2.
\]
Since $N_1 \le |\mathcal{S}_K|$ and $\sigma_K \le \sigma_{\max}$ by Assumption~\ref{ass:sigma-confinement}, rearranging yields
\[
  |\mathcal{U}_K|\,\log \gamma_2
  \;=\; \log\frac{\sigma_K}{\sigma_0} + N_1 \log \gamma_1
  \;\le\; \log\frac{\sigma_{\max}}{\sigma_0} + |\mathcal{S}_K|\,\log \gamma_1,
\]
and dividing by $\log \gamma_2 > 0$ establishes~\eqref{eq:rejected-count}.
\end{proof}

\subsection{Expected Iteration and Oracle Complexity}
\label{sec:expected-complexity}

We now state the iteration and oracle complexity bound for TOBYQA.

\begin{theorem}
\label{thm:main-complexity}
\textup{\textbf{(Expected oracle call complexity.)}}
Let Assumptions~\ref{ass:smoothness}--\ref{ass:sigma-confinement} hold. Let $\varepsilon > 0$ be any target gradient accuracy exceeding the statistical noise resolution floor
\begin{equation}\label{eq:noise-floor}
  \varepsilon \;\ge\; \varepsilon_{\mathrm{noise}} \;:=\; C_0\,\kappa_{eg}\,\max\Bigl\{\delta_{\mathrm{hi}},\; \frac{\sigma_\varepsilon}{\delta_{\mathrm{lo}}}\Bigr\},
\end{equation}
where $C_0 > 0$ is an absolute constant. Then the total number of iterations $K_\varepsilon$ (and consequently the number of oracle evaluations after initialization) required to achieve $\|\nabla f(\vct{x}_k)\|_2 \le \varepsilon$ satisfies
\begin{equation}\label{eq:complexity-rate}
  \mathbb{E}[K_\varepsilon] \;=\; O\Biggl(\frac{L_g\,\bigl(f(\vct{x}_0) - f_{\mathrm{low}}\bigr)}{\varepsilon^2}\Biggr).
\end{equation}
\end{theorem}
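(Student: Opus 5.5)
The plan is the usual probabilistic-model accounting argument: obtain a lower bound on the expected decrease of $f$ per \emph{successful good-model} iteration, use Proposition~\ref{prop:rejected-bound} to charge rejected iterations to accepted ones, and telescope against $f(\vct{x}_0)-f_{\mathrm{low}}$.

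\textbf{Step 1: good iterations.} Call iteration $k$ \emph{good} if the event in~\eqref{eq:grad-error-bound} holds. Its conditional probability is at least $1-p>\tfrac12$.
\begin{itemize}
\item On a good iteration with $\|\nabla f(\vct{x}_k)\|_2\ge\varepsilon$, Assumption~\ref{ass:radius-bounds} gives $\|\vct{g}_k-\nabla f(\vct{x}_k)\|_2\le\kappa_{eg}\max\{\delta_{\mathrm{hi}},\sigma_\varepsilon/\delta_{\mathrm{lo}}\}\le\varepsilon/C_0$ by~\eqref{eq:noise-floor}.
\item Choosing $C_0\ge 8$ yields $\|\vct{g}_k\|_2\ge\tfrac78\varepsilon$ and $\|\vct{g}_k-\nabla f(\vct{x}_k)\|_2\le\tfrac14\|\vct{g}_k\|_2$. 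Hence the second condition of~\eqref{eq:reliable-conditions} holds automatically, since $\mathrm{Pred}_k=\|\vct{g}_k\|_2\|\vct{s}_k\|_2$.
\item Lemma~\ref{lem:reliable-sigma} (when $\sigma_k\ge\sigma^*$) gives $\Delta f_k\ge\tfrac12\mathrm{Pred}_k$.
\item Assumption~\ref{ass:sigma-confinement} gives $\sigma_k\le\sigma_{\max}=O(L_g^2/\varepsilon)$ (absorbing $\sigma_0$). Then $\mathrm{Pred}_k=\|\vct{g}_k\|_2^{3/2}\sigma_k^{-1/2}\ge c\,\varepsilon^{3/2}(\varepsilon/L_g^2)^{1/2}=c\,\varepsilon^2/L_g$.
\end{itemize}
So every accepted good iteration in the reliable regime decreases $f$ by at least $c_1\varepsilon^2/L_g$.

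\textbf{Step 2: all accepted iterations.} Next I would control the decrease (or increase) of $f$ on \emph{all} accepted iterations, including bad-model ones and those with $\sigma_k<\sigma^*$. The acceptance rule and Assumption~\ref{ass:drift-regime} give
\[
\mathbb{E}[\Delta f_k\mathbf 1_{\mathrm{acc}}\mid\mathcal F_{k-1}]\ge \mathbb{E}[(\mathrm{Ared}_k-b_k)\mathbf 1_{\mathrm{acc}}\mid\mathcal F_{k-1}],
\]
with $\mathrm{Ared}_k\ge-\tfrac{\kappa_{\mathrm{tol}}}{2}\sqrt{2R_{\mathrm{obs}}}$ on acceptance and $|b_k|\le\tfrac14\mathrm{Pred}_k$.

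This is the step I expect to be the main obstacle. Bad iterations and the noise gate allow small increases of order $\sigma_\varepsilon$ and $\tfrac14\mathrm{Pred}_k$. One must show these are dominated, in expectation, by the guaranteed $(1-p)(1-p')$-fraction of good accepted decreases.

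I would first try to define a potential $\Phi_k=f(\vct{x}_k)$ and show $\mathbb{E}[\Phi_k-\Phi_{k+1}\mid\mathcal F_{k-1}]\ge c_2\varepsilon^2/L_g$ on iterations in the reliable regime. The ingredients are:
\begin{itemize}
\item Lemma~\ref{lem:reject-prob} for the acceptance probability $\ge 1-p'$;
\item $p<\tfrac12$ for the model accuracy;
\item the noise-floor condition~\eqref{eq:noise-floor}, with $C_0$ large, to make the $\sigma_\varepsilon$ loss a small fraction of $\varepsilon^2/L_g$. Here I use $\sigma_\varepsilon\le\varepsilon\delta_{\mathrm{lo}}/(C_0\kappa_{eg})$; this may need $\delta_{\mathrm{lo}}$ small, and I would state that dependence inside the constant.
\end{itemize}
Iterations with $\sigma_k<\gamma_2\sigma^*$ are handled via the $\sigma$-dynamics: there $\sigma_k$ increases geometrically on rejections. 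Their number is absorbed, as in the ARC analyses of Cartis--Gould--Toint, by the counting in Proposition~\ref{prop:rejected-bound}.

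\textbf{Step 3: counting.} Summing the expected decrease over $k<K_\varepsilon$ (using optional stopping on the stopped process, since $f\ge f_{\mathrm{low}}$) gives
\[
\mathbb{E}[|\mathcal S_{K_\varepsilon}|]\le C\,L_g(f(\vct{x}_0)-f_{\mathrm{low}})/\varepsilon^2 .
\]
Proposition~\ref{prop:rejected-bound} then bounds $\mathbb{E}|\mathcal U_{K_\varepsilon}|$ by $\tfrac{\log\gamma_1}{\log\gamma_2}\mathbb{E}|\mathcal S_{K_\varepsilon}|+O(\log(L_g^2/(\varepsilon\sigma_0)))$. The logarithmic term is lower order than $\varepsilon^{-2}$ (and can be absorbed for $\varepsilon$ bounded above).

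Since each iteration uses exactly one oracle call, $\mathbb{E}[K_\varepsilon]=\mathbb{E}|\mathcal S|+\mathbb{E}|\mathcal U|$ yields~\eqref{eq:complexity-rate}.
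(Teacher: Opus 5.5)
\textbf{Where the proposal stops short.} Your Steps~1 and~3 are the paper's argument. On the fully-linear event, $C_0\ge 8$ makes the model-error term at most $\tfrac14\mathrm{Pred}_k$. Lemma~\ref{lem:reliable-sigma} and $\sigma_k\le\sigma_{\max}=\Theta(L_g^2/\varepsilon)$ then give an $\Omega(\varepsilon^2/L_g)$ decrease. One telescopes $f$ over accepted iterations and charges rejections to acceptances via Proposition~\ref{prop:rejected-bound}. The gap is Step~2, which you flag but do not close, and neither mechanism you propose works under Assumptions~\ref{ass:smoothness}--\ref{ass:sigma-confinement}.

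TOBYQA accepts whenever $\mathrm{Ared}_k\ge-\tfrac{\kappa_{\mathrm{tol}}}{2}\sqrt{2R_{\mathrm{obs}}}$; there is no sufficient-decrease requirement $\rho_k\ge\eta_1>0$. Assumption~\ref{ass:drift-regime} allows $b_k=\tfrac14\mathrm{Pred}_k$. A step with $f(\vct{x}_k)-f(\vct{x}_k+\vct{s}_k)=-\tfrac14\mathrm{Pred}_k$ then has $\mathbb{E}[\mathrm{Ared}_k\mid\mathcal{F}_{k-1}]=0$, so it is accepted with probability at least $1-p'$. Such uphill acceptances can occur in two places:
\begin{enumerate}[\rm(i)]
\item on the failure event of Assumption~\ref{ass:fully-linear}, where nothing bounds $\|\vct{g}_k-\nabla f(\vct{x}_k)\|_2$, and hence nothing bounds the size of the increase;
\item on fully-linear iterations with $\sigma_k<\sigma^*(\vct{g}_k)$, where $\tfrac{L_g}{2}\|\vct{s}_k\|_2^2$ can exceed $\tfrac34\mathrm{Pred}_k$.
\end{enumerate}
So the potential $\Phi_k=f(\vct{x}_k)$ need not decrease in expectation. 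With $p$ allowed up to $\tfrac12$, failure iterations can cancel the $(1-p)(1-p')c_1\varepsilon^2/L_g$ gain, and enlarging $C_0$ only helps good iterations.

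The ARC-style absorption fails for the same reason. In Cartis--Gould--Toint every successful iteration satisfies $\rho_k\ge\eta_1$ and therefore decreases $f$ whatever $\sigma_k$ is. Here, accepted iterations with $\rho_k\le\eta$ (including $\rho_k<0$) leave $\sigma_k$ unchanged and need not decrease $f$. Proposition~\ref{prop:rejected-bound} only bounds rejections by acceptances and says nothing about how many unproductive acceptances occur. Hence your Step~3 bound on $\mathbb{E}|\mathcal{S}_{K_\varepsilon}|$ does not follow from a per-iteration decrease in the reliable regime.

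\textbf{Smaller points, and what is missing.} Your displayed inequality runs the wrong way. With $\eta_k:=\mathrm{Ared}_k-\mathbb{E}[\mathrm{Ared}_k\mid\mathcal{F}_{k-1}]$ one has $\mathrm{Ared}_k-b_k=\Delta f_k+\eta_k$. For a given step, the gate accepts exactly when $\eta_k$ exceeds a threshold, so $\mathbb{E}[\eta_k\mathbf{1}_{\mathrm{acc}}\mid\mathcal{F}_{k-1}]\ge 0$ and the correct relation is $\le$. You must subtract $\mathbb{E}|\eta_k|\le\sqrt{2}\,\sigma_\varepsilon$.

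You also do not need $\delta_{\mathrm{lo}}$ small to control the $O(\sigma_\varepsilon)$ gate loss. Multiplying the two halves of~\eqref{eq:noise-floor} gives $\sigma_\varepsilon\le\varepsilon^2/(C_0^2\kappa_{eg}^2)$. The loss is then a small fraction of $\varepsilon^2/L_g$ once $C_0^2\kappa_{eg}^2\gg\kappa_{\mathrm{tol}}L_g$, which makes $C_0$ problem-dependent rather than absolute.

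Closing Step~2 requires ingredients absent from the standing assumptions:
\begin{enumerate}[\rm(i)]
\item control of $f(\vct{x}_k)-f(\vct{x}_{k+1})$ on \emph{every} accepted iteration, for instance a ratio test $\rho_k\ge\eta_1>0$ with probabilistically accurate reduction estimates as in STORM;
\item a bound on the expected damage on the failure event;
\item either a lower bound on $\sigma_k$ at accepted fully-linear iterations, or a potential combining $f(\vct{x}_k)$ with $\sigma_k$.
\end{enumerate}

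The paper does not supply this either. It applies Lemma~\ref{lem:reliable-sigma} at accepted fully-linear iterations without verifying $\sigma_k\ge\sigma^*(\vct{g}_k)$. It telescopes as though every accepted iteration were productive, treats non-fully-linear acceptances as contributing ``no guaranteed descent,'' and then inflates by $(1-p)^{-1}$. Your Step~2 correctly identifies what that argument takes for granted, but as written your proposal does not establish~\eqref{eq:complexity-rate}.
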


The restriction $\varepsilon\ge\varepsilon_{\mathrm{noise}}$ reflects the statistical resolution of single noisy queries: without replication, reductions below the observation-noise scale cannot be reliably distinguished from random fluctuations. The posterior-covariance stopping rule in Algorithm~\ref{alg:tobyqa} is calibrated to this same scale.

\begin{proof}
Let $k \in \mathcal{S}_K$ be an accepted iteration prior to termination. Conditioned on the fully-linear event $\mathcal{E}_{k-1}^{\mathrm{FL}}$ from Assumption~\ref{ass:fully-linear}, the model gradient satisfies $\|\vct{g}_k\|_2 \ge \tfrac{1}{2}\|\nabla f(\vct{x}_k)\|_2 \ge \tfrac{1}{2}\varepsilon$. Moreover, since $\varepsilon \ge C_0\kappa_{eg}\zeta_k$ by~\eqref{eq:noise-floor}, the model-error contribution satisfies
\begin{align*}
  \|\vct{g}_k - \nabla f(\vct{x}_k)\|_2\,\|\vct{s}_k\|_2
  &\;\le\; \kappa_{eg}\,\zeta_k\,\|\vct{s}_k\|_2
  \;\le\; \frac{\varepsilon}{C_0}\,\|\vct{s}_k\|_2
  \;\le\; \frac{2\|\vct{g}_k\|_2}{C_0}\,\|\vct{s}_k\|_2 \\
  &\;=\; \frac{2}{C_0}\,\mathrm{Pred}_k
  \;\le\; \tfrac{1}{4}\,\mathrm{Pred}_k,
\end{align*}
provided $C_0 \ge 8$. Combining Lemma~\ref{lem:reliable-sigma}, this gradient-accuracy bound, and Assumption~\ref{ass:drift-regime}, the latent-objective decrease on an accepted step satisfies
\begin{align*}
  f(\vct{x}_k) - f(\vct{x}_{k+1})
  &\;\ge\; \tfrac{1}{4}\,\mathrm{Pred}_k \\
  &\;=\; \tfrac{1}{4}\,\|\vct{g}_k\|_2^{3/2}\,\sigma_k^{-1/2} \\
  &\;\ge\; \tfrac{1}{4}\,\Bigl(\frac{\varepsilon}{2}\Bigr)^{3/2} \sigma_{\max}^{-1/2}.
\end{align*}
Substituting $\sigma_{\max} = \gamma_2\,\max\{\sigma_0, 8L_g^2/\varepsilon\} = \Theta(L_g^2/\varepsilon)$ from~\eqref{eq:sigma-max} yields
\begin{equation}\label{eq:per-step-descent}
  f(\vct{x}_k) - f(\vct{x}_{k+1})
  \;\ge\;
  \frac{1}{4}\,\frac{\varepsilon^{3/2}}{2\sqrt{2}}\,\sqrt{\frac{\varepsilon}{8\gamma_2 L_g^2}}
  \;\ge\;
  c\,\frac{\varepsilon^2}{L_g}
  \;=\;
  \Omega\Biggl(\frac{\varepsilon^2}{L_g}\Biggr),
\end{equation}
with $c > 0$ an absolute constant (e.g.\ $c = (32\sqrt{\gamma_2})^{-1}$ when $\sigma_0 \le 8L_g^2/\varepsilon$).
Summing the descent inequality~\eqref{eq:per-step-descent} over all accepted iterations $k \in \mathcal{S}_K$ gives:
\[
  f(\vct{x}_0) - f_{\mathrm{low}} \;\ge\; f(\vct{x}_0) - f(\vct{x}_K) \;=\; \sum_{k \in \mathcal{S}_K} \bigl(f(\vct{x}_k) - f(\vct{x}_{k+1})\bigr) \;\ge\; |\mathcal{S}_K|\,\cdot\,\Omega\Biggl(\frac{\varepsilon^2}{L_g}\Biggr),
\]
which establishes that the total number of accepted iterations satisfies
\begin{equation}\label{eq:accepted-bound}
  |\mathcal{S}_K| \;\le\; O\Biggl(\frac{L_g\,\bigl(f(\vct{x}_0) - f_{\mathrm{low}}\bigr)}{\varepsilon^2}\Biggr).
\end{equation}
By Proposition~\ref{prop:rejected-bound}, the number of unsuccessful iterations is linearly bounded by $|\mathcal{S}_K|$ up to an additive constant:
\[
  K_\varepsilon \;=\; |\mathcal{S}_K| + |\mathcal{U}_K| \;\le\; \Bigl(1 + \frac{\log \gamma_1}{\log \gamma_2}\Bigr)\,|\mathcal{S}_K| \;+\; \frac{1}{\log \gamma_2}\log\frac{\sigma_{\max}}{\sigma_0}.
\]
Finally, we pass to expectations. The descent bound above holds on the fully-linear event, which occurs with conditional probability at least $1 - p$ at every iteration by Assumption~\ref{ass:fully-linear}; accepted iterations lacking the event contribute no guaranteed descent. Since each iteration is fully linear with conditional probability at least $1 - p$, the expected number of iterations needed to accumulate the $|\mathcal{S}_K|$ descent-producing accepted iterations is inflated by at most the factor $(1 - p)^{-1} \le 2$, which is absorbed into the big-$O$ constant. Since TOBYQA evaluates one new query per iteration after the initial $2n+1$ evaluations, the total number of oracle calls satisfies $\mathbb{E}[N_{\mathrm{eval}}] = 2n + 1 + \mathbb{E}[K_\varepsilon] = O(L_g(f(\vct{x}_0) - f_{\mathrm{low}})\varepsilon^{-2})$, completing the proof.
\end{proof}

\begin{remark}
\label{rem:optimality}
\textup{\textbf{(Comparison with first-order complexity.)}}
The rate $O(\varepsilon^{-2})$ in Theorem~\ref{thm:main-complexity} matches the information-theoretic lower bound for first-order methods on $C^{1,1}$ nonconvex objectives~\cite{CarmonDuchiHinderSidford2020}. This is consistent with the information available to TOBYQA at the budget $m=2n+1$: the active set supports a fully-linear model, but not an asymptotically accurate Hessian. The $O(\varepsilon^{-3/2})$ rate of derivative-based adaptive cubic regularization~\cite{NesterovPolyak2006,CartisGouldToint2011ARC,CartisGouldToint2011ARC2} requires $\|(H_k-\nabla^2f(\vct{x}_k))\vct{s}_k\|_2=O(\|\vct{s}_k\|_2^2)$~\cite[AM.4]{CartisGouldToint2011ARC}, whose recovery would generally require $O(n^2)$ samples. Under temporal drift, such a sample set also spans $O(n^2)$ time steps and incurs the nonlinear drift error quantified in Theorem~\ref{thm:nonlinear-drift}. TOBYQA instead uses an $O(n)$ active set, with a shorter temporal span and the first-order complexity rate.
\end{remark}

\section{Numerical Experiments}
\label{sec:experiments}

\noindent\textbf{Experimental Setup.}
The benchmark testbed comprises $65$ scalable smooth unconstrained test problems in the CUTEst style~\cite{MoreWild2009}, evaluated at dimensions $n \in \{6, 10, 20\}$ with $5$ random seeds controlling the observation noise and drift realizations. Each problem is queried through one of the seven evaluation channels summarized in Table~\ref{tab:envs}. Channel amplitudes and observation noise variances are scaled proportionally to the initial function magnitude $s_0 = \max(|f(\vct{x}_0)|, 1)$, with relative noise scale $\sigma_\varepsilon = 10^{-3} s_0$. The temporal query index advances as $t \leftarrow t + 1$ on each oracle evaluation across all solvers.

\begin{table}[t]
  \centering\small
  \caption{Observation drift channels used in the benchmark experiments. Here $s_0 = \max(|f(\vct{x}_0)|, 1)$, $t$ is the sequential oracle query index, and the Lorenz trajectory is generated with parameters $(\sigma, \rho, \beta) = (10, 28, 8/3)$, initial state $(1,1,1)$, and integration step $0.01$.}
  \label{tab:envs}
  \begin{tabular}{lll}
    \toprule
    Channel & Analytical Form $\phi(f(\vct{x}), t)$ & Classification \\
    \midrule
    Static        & $f(\vct{x})$                                      & Baseline ($\beta = 0$) \\
    LinearDrift   & $f(\vct{x}) + 0.01\,s_0\,t$                       & Affine drift (Theorem~\ref{thm:exact-decoupling}) \\
    PeriodicAdd   & $f(\vct{x}) + 0.05\,s_0\sin(2\pi t/50)$           & $C^2$ bounded drift (Theorem~\ref{thm:nonlinear-drift}) \\
    ChaoticDrift  & $f(\vct{x}) + 0.03\,s_0\,\mathrm{Lorenz}_x(t)$    & Chaotic dynamic drift \\
    Hetero        & $f(\vct{x}) + 0.01\,s_0\,t\,(1+\tanh\|\vct{x}-\vct{x}_0\|)$   & Spatially coupled drift \\
    MultCoupling  & $f(\vct{x})\cdot(1 + 10^{-4}t)$                   & Multiplicative drift \\
    PeriodicMult  & $f(\vct{x})\cdot(1 + 0.05\sin(2\pi t/50))$        & Multiplicative oscillatory drift \\
    \bottomrule
  \end{tabular}
\end{table}

We benchmark TOBYQA against representative classical and modern derivative-free solvers:
\begin{enumerate}[\rm(i)]
  \item \textbf{NEWUOA \& BOBYQA}~\cite{Powell2006NEWUOA,Powell2009BOBYQA}: Powell's classical quadratic interpolation solvers, executed via the authoritative PDFO library~\cite{RagonneauZhang2024};
  \item \textbf{DFO-LS}~\cite{CartisRoberts2019}: A modern model-based derivative-free solver with noise-aware regression enabled;
  \item \textbf{Nelder--Mead}~\cite{NelderMead1965}: The classical heuristic simplex search method;
  \item \textbf{BFGS (FD)}: Quasi-Newton BFGS using forward-difference gradient approximations ($h = 10^{-4}$).
\end{enumerate}
Every solver is allocated a maximum budget of $1500$ evaluations per instance. Performance is measured by evaluating every queried point on the noise-free static latent objective $f$. Following Mor\'e and Wild~\cite{MoreWild2009}, an instance is declared solved at accuracy tolerance $\tau \in \{10^{-1}, 10^{-3}, 10^{-5}, 10^{-7}\}$ if $\min_k f(\vct{x}_k) \le f_L + \tau(f(\vct{x}_0) - f_L)$, where $f_L$ is the global minimum achieved across all solvers.

\subsection{Benchmark Comparison with Existing Solvers}
\label{sec:bench-comparison}

\begin{table}[t]
\centering\footnotesize
\setlength{\tabcolsep}{3pt}
\caption{Solve rates (\%). Budget: 1500 evaluations per instance.}
\label{tab:solve-rates}
\begin{tabular}{lrrrrrrrrrrrr}
\toprule
Solver & \multicolumn{3}{c}{$\tau=10^{-1}$} & \multicolumn{3}{c}{$\tau=10^{-3}$} & \multicolumn{3}{c}{$\tau=10^{-5}$} & \multicolumn{3}{c}{$\tau=10^{-7}$} \\
\cmidrule(lr){2-4}\cmidrule(lr){5-7}\cmidrule(lr){8-10}\cmidrule(lr){11-13}
 & $n{=}6$ & $n{=}10$ & $n{=}20$ & $n{=}6$ & $n{=}10$ & $n{=}20$ & $n{=}6$ & $n{=}10$ & $n{=}20$ & $n{=}6$ & $n{=}10$ & $n{=}20$ \\
\midrule
\textbf{TOBYQA} & 91.7 & 88.7 & 87.4 & 71.0 & 60.8 & 41.9 & 37.6 & 33.2 & 26.4 & 31.4 & 29.0 & 24.3 \\
NEWUOA & 71.6 & 61.1 & 56.5 & 20.7 & 18.5 & 14.9 & 8.5 & 11.3 & 10.4 & 5.2 & 7.1 & 8.7 \\
BOBYQA & 61.9 & 58.2 & 51.6 & 18.6 & 16.1 & 10.8 & 7.7 & 10.2 & 8.4 & 4.2 & 6.7 & 6.6 \\
DFO-LS & 70.2 & 58.7 & 38.9 & 25.1 & 8.8 & 4.9 & 8.4 & 4.8 & 3.8 & 5.3 & 3.2 & 3.3 \\
Nelder--Mead & 34.5 & 20.0 & 11.9 & 8.6 & 3.7 & 3.2 & 3.7 & 2.2 & 2.9 & 2.0 & 1.1 & 1.4 \\
BFGS (FD) & 14.0 & 10.4 & 1.3 & 1.2 & 0.7 & 0.7 & 0.7 & 0.7 & 0.7 & 0.7 & 0.7 & 0.7 \\
\bottomrule
\end{tabular}
\end{table}

Table~\ref{tab:solve-rates} reports the overall solve rates across dimensions $n \in \{6, 10, 20\}$ and target tolerances $\tau \in \{10^{-1}, 10^{-3}, 10^{-5}, 10^{-7}\}$, along with the median number of evaluations among solved instances. Figure~\ref{fig:data-profile} displays the cumulative data profiles at tolerance $\tau = 10^{-3}$, and Figure~\ref{fig:rate-vs-tau} depicts the solve rate decay curves across tolerances spanning seven orders of magnitude.

\begin{figure}[t]
  \centering
  \includegraphics[width=.72\textwidth]{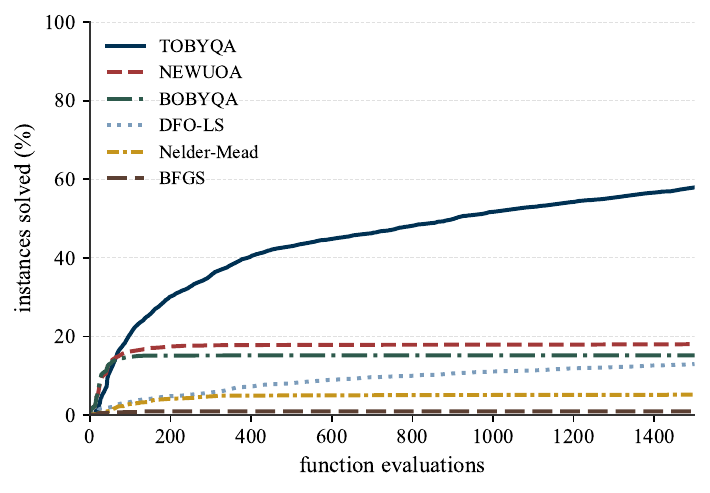}
  \caption{Data profiles at tolerance $\tau = 10^{-3}$: fraction of benchmark instances solved as a function of the number of function evaluations, pooled across all test problems, dimensions, and drift channels.}
  \label{fig:data-profile}
\end{figure}

\begin{figure}[t]
  \centering
  \includegraphics[width=.72\textwidth]{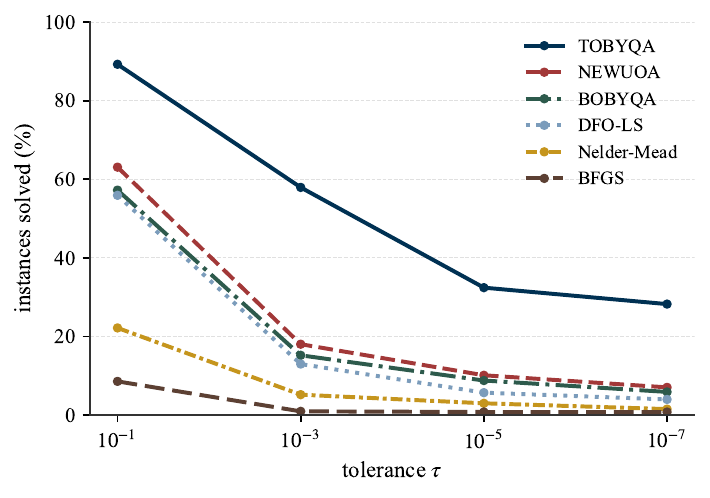}
  \caption{Solve rates as a function of convergence tolerance $\tau \in [10^{-7}, 10^{-1}]$, pooled across all benchmark configurations.}
  \label{fig:rate-vs-tau}
\end{figure}

The numerical results show the following patterns:
\begin{enumerate}[\rm(i)]
  \item \textbf{Solve rates across tolerances:} TOBYQA has the highest solve rate at every reported tolerance and dimension. At $\tau = 10^{-3}$, TOBYQA solves $71.0\%$ of instances at $n = 6$, $60.8\%$ at $n = 10$, and $41.9\%$ at $n = 20$, compared with $20.7\%/18.5\%/14.9\%$ for NEWUOA and $18.6\%/16.1\%/10.8\%$ for BOBYQA.
  \item \textbf{Static interpolation under drift:} NEWUOA and BOBYQA attain solve rates of $26.6\%$ and $22.1\%$ in the static environment (Table~\ref{tab:env-breakdown}), while their rates under additive temporal drift are approximately $9\%$--$15\%$. In these methods, temporal variation enters the fitted spatial quadratic model and can alter the search direction.
  \item \textbf{Finite-difference BFGS under drift:} BFGS (FD) has low solve rates under the drifting, noisy channels. With $\sigma_\varepsilon = 10^{-3} s_0$ and finite-difference step $h = 10^{-4}$, the difference quotients amplify observation noise and incur an $O(\beta/h)$ bias under affine drift.
  \item \textbf{Evaluation-budget use:} As shown in Table~\ref{tab:budget}, NEWUOA and BOBYQA return before exhausting the budget on $99\%\sim100\%$ of runs, with median totals of $147$--$185$ evaluations out of $1500$. TOBYQA reaches the evaluation budget in most runs and otherwise terminates through the posterior-covariance criterion~\eqref{eq:stopping-rule}.
\end{enumerate}

\begin{table}[t]
\centering\footnotesize
\setlength{\tabcolsep}{3pt}
\caption{Solve rates (\%) at $\tau=10^{-3}$ broken down by drift regime, pooled over all $n$ and seeds.}
\label{tab:env-breakdown}
\begin{tabular}{lrrrrrrr}
\toprule
Solver & Chaotic & Hetero & Linear & MultCoupling & PeriodicAdd & PeriodicMult & Static \\
\midrule
\textbf{TOBYQA} & 56.6 & 50.1 & 65.0 & 57.6 & 57.2 & 59.7 & 59.3 \\
NEWUOA & 10.8 & 12.0 & 10.2 & 30.4 & 15.2 & 21.0 & 26.6 \\
BOBYQA & 12.4 & 9.4 & 9.9 & 21.9 & 12.6 & 17.8 & 22.1 \\
DFO-LS & 13.1 & 2.6 & 2.7 & 22.1 & 8.8 & 18.7 & 22.8 \\
Nelder--Mead & 1.5 & 0.0 & 0.6 & 12.8 & 3.1 & 6.6 & 11.5 \\
BFGS (FD) & 1.9 & 0.6 & 1.5 & 0.3 & 0.8 & 0.8 & 0.3 \\
\bottomrule
\end{tabular}
\end{table}

\begin{table}[t]\centering\small
\caption{Budget utilization. An early return indicates that the solver stopped through an internal termination criterion before exhausting the evaluation budget.}
\label{tab:budget}
\begin{tabular}{lrrr}\toprule
Solver & median evals used & \% of budget & \% runs stopping early \\\midrule
\textbf{TOBYQA} & 1500 & 100.0 & 8.5 \\
NEWUOA & 185 & 12.3 & 99.0 \\
BOBYQA & 147 & 9.8 & 99.8 \\
DFO-LS & 1500 & 100.0 & 25.0 \\
Nelder--Mead & 1500 & 100.0 & 0.0 \\
BFGS (FD) & 180 & 12.0 & 99.6 \\
\bottomrule\end{tabular}\end{table}

\subsection{Ablation Studies and Curvature Treatment}
\label{sec:ablations}

Table~\ref{tab:ablation} reports an ablation study across the seven drift channels.

\begin{table}[t]
\centering\footnotesize
\setlength{\tabcolsep}{2.2pt}
\caption{Ablation of TOBYQA components: solve rates (\%) at $\tau=10^{-3}$ by drift regime, pooled over all $n$ and seeds.}
\label{tab:ablation}
\begin{tabular}{lrrrrrrr}
\toprule
Solver & Chaotic & Hetero & Linear & MultCoupling & PeriodicAdd & PeriodicMult & Static \\
\midrule
\textbf{TOBYQA} & 56.6 & 50.1 & 65.0 & 57.6 & 57.2 & 59.7 & 59.3 \\
\quad + interpolated $H$ & 48.4 & 55.3 & 66.5 & 62.7 & 51.5 & 65.1 & 61.2 \\
\quad -- drift compensation & 9.7 & 7.3 & 8.5 & 57.9 & 12.8 & 47.2 & 59.7 \\
\quad -- curvature kernel $A$ & 29.1 & 30.2 & 40.1 & 37.7 & 35.0 & 38.3 & 36.2 \\
\quad + Powell update & 12.2 & 19.0 & 16.9 & 14.5 & 13.9 & 14.8 & 15.1 \\
\quad frozen $\sigma$ & 39.1 & 36.4 & 38.4 & 37.9 & 38.5 & 36.3 & 36.8 \\
\bottomrule
\end{tabular}
\end{table}

\noindent\textbf{Impact of Drift Compensation.}
In this ablation, the time-augmented KKT system is retained for spatial-gradient recovery, but its recovered coefficient is set to $\gamma=0$ in the acceptance statistic~\eqref{eq:ared-def}. The ablated variant is nearly unchanged on the Static channel ($59.7\%$ vs $59.3\%$), whereas its solve rate falls to $7.3\%$--$12.8\%$ across the additive drifting channels. The comparison isolates the practical contribution of drift compensation in the acceptance test.

\noindent\textbf{Role of the Curvature Prior Kernel $A$.}
Replacing the composite kernel $K = A + \rho^{-1} I_m$ with an unweighted ridge penalty $K = \rho^{-1} I_m$ (which treats residuals as independent white noise and ignores curvature correlation) reduces the solve rate from $59.3\%$ to $36.2\%$ on Static and from $65.0\%$ to $40.1\%$ on LinearDrift. This result is consistent with the statistical interpretation in Proposition~\ref{prop:kernel-stat}: the kernel $A_{ij} = \tfrac{1}{2}(\vct{d}_i^\top \vct{d}_j)^2$ accounts for correlated residual variation induced by unmodeled curvature.

\noindent\textbf{Curvature Treatment: Zero or Interpolated Hessian.}
The Zero-Hessian default of Section~\ref{sec:arc-decision} was adopted on identifiability grounds: at $m = 2n+1$ the $\tfrac{1}{2}n(n+1)$ quadratic coefficients are underdetermined. The benchmark data show, however, that the relative merit of the interpolated Hessian $H_k = \sum_i \lambda_i \vct{d}_i \vct{d}_i^\top$ varies with dimension in a way that this argument alone does not predict. Table~\ref{tab:hessian} isolates the two variants. At $n = 6$ the default is better at both tolerances ($71.0\%$ vs $63.6\%$ at $\tau = 10^{-3}$; $37.6\%$ vs $26.4\%$ at $\tau = 10^{-5}$), and the gap widens at the reduced budget of $450$ evaluations ($83.3\%$ vs $67.3\%$). At $n = 20$ the ordering reverses at every tolerance ($51.3\%$ vs $41.9\%$ at $\tau = 10^{-3}$; $39.4\%$ vs $26.4\%$ at $\tau = 10^{-5}$), and at the enlarged budget of $5000$ evaluations the interpolated variant also wins the paired comparison ($1229$ of $2275$ instances against $963$ for the default, $83$ ties). Because the two budget-shifted cells align the per-variable evaluation budget across dimensions, the reversal follows the dimension rather than the budget.

\begin{table}[t]
\centering\footnotesize
\setlength{\tabcolsep}{3pt}
\caption{Solve rates (\%) of the Zero-Hessian default versus the interpolated Hessian, by dimension and tolerance. The first three column groups use the main budget of $1500$ evaluations; the last two use budget-scaled cells ($n=6$ with $450$ evaluations, $n=20$ with $5000$). Pooled over all $65$ problems, $7$ drift channels, and $5$ seeds.}
\label{tab:hessian}
\begin{tabular}{l cc cc cc cc cc}
\toprule
& \multicolumn{2}{c}{$n=6$} & \multicolumn{2}{c}{$n=10$} & \multicolumn{2}{c}{$n=20$} & \multicolumn{2}{c}{$n=6$, b=450} & \multicolumn{2}{c}{$n=20$, b=5000} \\
\cmidrule(lr){2-3}\cmidrule(lr){4-5}\cmidrule(lr){6-7}\cmidrule(lr){8-9}\cmidrule(lr){10-11}
Solver & $10^{-3}$ & $10^{-5}$ & $10^{-3}$ & $10^{-5}$ & $10^{-3}$ & $10^{-5}$ & $10^{-3}$ & $10^{-5}$ & $10^{-3}$ & $10^{-5}$ \\
\midrule
TOBYQA ($H_k \equiv 0$) & 71.0 & 37.6 & 60.8 & 33.2 & 41.9 & 26.4 & 83.3 & 64.3 & 67.1 & 45.9 \\
\quad + interpolated $H$ & 63.6 & 26.4 & 61.2 & 31.6 & 51.3 & 39.4 & 67.3 & 44.9 & 70.8 & 59.7 \\
\bottomrule
\end{tabular}
\end{table}

The environment breakdown at $n = 20$ and budget $5000$ shows a drift-specific pattern: the interpolated Hessian has higher solve rates on Static, Hetero, and LinearDrift ($84.4\%$ vs $60.8\%$ on Static), but lower rates on ChaoticDrift and PeriodicAdd ($39.0\%$ vs $67.2\%$ and $38.2\%$ vs $68.0\%$). The latter two channels contain nonlinear additive drift, which contributes to the recovery error bounded in Theorem~\ref{thm:nonlinear-drift}. The identifiability ratio $(2n+1)/(\tfrac{1}{2}n(n+1))$ decreases monotonically in $n$, yet the interpolated Hessian becomes more useful as $n$ grows, so underdetermination alone does not explain the reversal. We therefore retain the Zero-Hessian default and leave the dimension-dependent role of interpolated curvature as an open question (Section~\ref{sec:conclusion}). Powell's incremental update has solve rates of $12\%\sim 19\%$ across all channels (Table~\ref{tab:ablation}), consistent with the effect of carrying curvature information across changing temporal baselines.

\phantomsection
\subsection{Numerical Checks of the Drift-Recovery Results}
\label{sec:numerical-verification}

We conclude the experiments with two numerical checks of the results in Section~\ref{sec:properties}.

\noindent\textbf{Affine-drift gradient invariance (Theorem~\ref{thm:exact-decoupling}).}
We generated synthetic sample sets $\mathcal{Y}$ and added linear drift at rates $\beta \in \{1, 10^3, 10^6\}$. Relative to the drift-free baseline, the spatial gradient difference $\|\vct{g} - \vct{g}^{\mathrm{stat}}\|_2$ was $1.5 \times 10^{-13}$ at $\beta = 1$ and $5.5 \times 10^{-8}$ at $\beta = 10^6$, while the recovered temporal coefficient agreed with $\gamma^{\mathrm{stat}}+\beta$ to 10 significant digits across the tested configurations.

\noindent\textbf{Nonlinear-drift bias (Theorem~\ref{thm:nonlinear-drift}).}
Under quadratic drift $\mu(t) = \tfrac{1}{2} L_\mu t^2$, we evaluated $|\gamma - \gamma^{\mathrm{stat}} - \mu'(t_k)|$ as a function of the active-set temporal span $\tau_{\max}$. Over an eightfold variation in $\tau_{\max}$, the empirical bias was approximately linear in $\tau_{\max}$, with correlation $r > 0.998$, consistent with the bound for the recovered temporal coefficient in Theorem~\ref{thm:nonlinear-drift}.

\phantomsection
\section{Conclusion}
\label{sec:conclusion}

We have introduced TOBYQA, a time-augmented framework for derivative-free optimization under noise and temporal drift.

The analysis establishes solvability of the augmented KKT system under a full-column-rank condition on $X_{\mathrm{aug}}$. Because the composite kernel $K=A+\rho^{-1}I_m$ is positive definite, this condition is sufficient for a unique solution without the strict interpolation poisedness required by classical minimum-Frobenius-norm models (Proposition~\ref{prop:solvability}). The same recovery operator satisfies $\Pi X_{\mathrm{aug}}=I_{n+2}$. Consequently, under affine observation drift, the recovered spatial gradient is invariant to the drift magnitude, while under $C^2$ nonlinear drift its bias is bounded by $O(L_\mu\tau_{\max}^2)$ and therefore depends on the temporal span of the active sample set (Theorems~\ref{thm:exact-decoupling} and~\ref{thm:nonlinear-drift}).

The kernel also admits a statistical interpretation. Under a rotation-invariant Gaussian curvature prior, Powell's residual kernel $A_{ij}=\tfrac{1}{2}(\vct{d}_i^\top\vct{d}_j)^2$ is proportional to the covariance of the omitted quadratic forms, so the soft-constrained KKT estimate coincides with the generalized least-squares estimator under the resulting residual model (Proposition~\ref{prop:kernel-stat}). Under the probabilistically fully-linear framework and the regularization confinement condition, the expected oracle complexity above the statistical noise floor is $O(L_g(f(\vct{x}_0)-f_{\mathrm{low}})\varepsilon^{-2})$, matching the lower bound for first-order nonconvex optimization (Theorem~\ref{thm:main-complexity}).

The experiments cover $65$ benchmark problems across three dimensions ($n \in \{6, 10, 20\}$) and seven temporal drift regimes. TOBYQA has the highest reported solve rates in these tests, with the largest differences occurring under additive drift. The ablations show lower solve rates when either drift compensation or the curvature prior kernel is removed. The budget-resolved comparison also shows that the effect of fitting explicit quadratic curvature depends on both dimension and drift channel: it is less effective in low dimension and under nonlinear drift, but can be beneficial in higher dimension under the tested static and affine-drift settings. This pattern is not explained by the identifiability ratio of the quadratic coefficients alone.

Several questions remain open. The dimension-dependent behavior of the interpolated curvature variant calls for a criterion that predicts when curvature estimation is useful at the budget $m=2n+1$. The present framework assumes a static latent objective with a drifting observation channel; extending it to $f(\vct{x},t)$ would require a model for the motion of $\vct{x}_*(t)$ and corresponding tracking-error bounds. Polynomial time columns $[\vct{\theta},\vct{\theta}^2,\dots,\vct{\theta}^p]$ could be used to represent polynomial drift, but the conditioning of the enlarged system and its effect on recovery error remain to be analyzed. Extensions to bound constraints $\vct{l}\le\vct{x}\le\vct{u}$ and general linear constraints provide another direction for further work.

\phantomsection
\bibliographystyle{siamplain}
\bibliography{references}

\end{document}